\newtheorem{proposition}{Proposition}[section]
\newtheorem{theorem}[proposition]{Theorem}
\newtheorem{corollary}[proposition]{Corollary}
\newtheorem{lemma}[proposition]{Lemma}
\theoremstyle{remark}
\newtheorem{remark}[proposition]{Remark}
\newtheorem{example}[proposition]{Example}
\newcommand{\nc}{\newcommand}
\nc{\I}{{\mathbf 1}}
\nc{\bN}{{\mathbf N}}
\nc{\cB}{{\mathcal B}}
\nc{\cF}{{\mathcal F}}
\nc{\cP}{{\mathcal P}}
\nc{\cX}{{\mathcal X}}
\nc{\cS}{{\mathcal S}}
\nc{\cL}{{\mathcal L}}
\nc{\cN}{{\mathcal N}}
\nc{\R}{{\mathbb R}}
\nc{\N}{{\mathbb N}}
\nc{\Z}{{\mathbb Z}}
\nc{\BX}{{\mathbb X}}
\nc{\BY}{{\mathbb Y}}
\DeclareMathOperator{\dom}{dom}
\nc{\BP}{\mathbb{P}}
\nc{\BE}{\mathbb{E}}
\nc{\BQ}{\mathbb{Q}}
\nc{\bH}{\overline{H}}
\DeclareMathOperator{\BV}{{\mathbb Var}}
\newcommand{\Dirac}{\delta}
\newcommand{\deltaKS}{\boldsymbol{\delta}}
\numberwithin{equation}{section}
\begin{document} 

\renewcommand{\thefootnote}{\fnsymbol{footnote}}
\author{G. Last\footnotemark[1]\,, I. Molchanov\footnotemark[2]\, 
and M. Schulte\footnotemark[3]}
\footnotetext[1]{guenter.last@kit.edu, Karlsruhe Institute of
  Technology, Institute for Stochastics. 
	}
\footnotetext[2]{ilya.molchanov@stat.unibe.ch, University of Bern,
  Institute of Mathematical Statistics and Actuarial Science. 
	}
\footnotetext[3]{matthias.schulte@tuhh.de, Hamburg University of Technology,
  Institute of Mathematics. 
	}

\title{Normal approximation of Kabanov--Skorohod integrals\\ on Poisson spaces} 
\date{\today}
\maketitle
\begin{abstract}\noindent
  We consider the normal approximation of Kabanov--Skorohod integrals
  on a general Poisson space. Our bounds are for the Wasserstein and
  the Kolmogorov distance and involve only difference operators of the
  integrand of the Kabanov--Skorohod integral. The proofs rely on the
  Malliavin--Stein method and, in particular, on multiple applications
  of integration by parts formulae. As examples, we study some linear
  statistics of point processes that can be constructed by Poisson
  embeddings and functionals related to Pareto optimal points of a
  Poisson process.
\end{abstract}
\noindent
{\bf Keywords:} Kabanov--Skorohod integral, Poisson process, normal
approximation, Stein's method, Malliavin calculus

\vspace{0.1cm}
\noindent
{\bf AMS MSC 2020:} Primary: 60F05; secondary: 60G55, 60H05, 60H07

\section{Introduction}

Let $\eta$ be a {\em Poisson process} on a measurable space
$(\BX,\cX)$ with a $\sigma$-finite intensity measure $\lambda$,
defined on some probability space $(\Omega,\cF,\BP)$.  Formally,
$\eta$ is a {\em point process}, which is a random element of the
space $\bN$ of all $\sigma$-finite measures on $\BX$ with values in
$\mathbb{N}_0\cup\{\infty\}$, equipped with the smallest
$\sigma$-field $\cN$ making the mappings $\mu\mapsto\mu(B)$ measurable
for each $B\in\cX$.  The Poisson process $\eta$ is \emph{completely
  independent}, that is, $\eta(B_1),\hdots,\eta(B_n)$ are independent
for pairwise disjoint $B_1,\hdots,B_n\in\mathcal{X}$,
$n\in\mathbb{N}$, and $\eta(B)$ has for each $B\in\cX$ a Poisson
distribution with parameter $\lambda(B)$, see e.g.\
\cite{Kallenberg02,LastPenrose17}.

Let $G\colon \bN\times\BX\to\R$ be a measurable function which is
square integrable with respect to $\BP_\eta\otimes\lambda$, where
$\BP_\eta:=\BP(\eta\in\cdot)$ denotes the distribution of $\eta$. In
this paper, we study the Kabanov--Skorohod integral (short:
KS-integral) of $G$ defined as a Malliavin operator. If $G$ is in the
domain of the KS-integral and integrable with respect to
$\BP_\eta\otimes\lambda$, its KS-integral is pathwise given by
\begin{align}\label{e1.1}
  \deltaKS(G)=\int G_x(\eta-\Dirac_x)\,\eta(dx)-\int G_x(\eta)\,\lambda(dx),
\end{align}
where $\Dirac_x$ stands for the Dirac measure at $x\in\BX$, 
see e.g.\ \cite[Theorem~6]{Last16}. In this case, the Mecke formula
immediately yields that $\BE \deltaKS(G)=0$. We refer to \cite{Last16}
for an introduction to stochastic calculus on a general Poisson
space.

The pathwise representation \eqref{e1.1} of the KS-integral consists
of two terms. The first term is the sum of the values
$G_x(\eta-\Dirac_x)$ over the points of $\eta$. Such sums have been
intensively studied. The state of the art of limit theorems for such
sums is presented in \cite{lac:sch:yuk19}, based on the idea of
stabilisation. The stabilisation property means that the functional
$G_x(\eta-\Dirac_x)$ depends only on points of $\eta$ within some
finite random distance from $x$, with conditions imposed on the
distribution of such a distance. As in \cite{lac:sch:yuk19}, we use recent developments
of the Malliavin--Stein technique for Poisson processes, first
elaborated in \cite{PSTU10} and then extended in
\cite{eich:tha16,LPY20,las:pec:sch16,schulte16}.

In all above mentioned works, the sums over Poisson processes are
centred by subtracting the expectation, which is
\begin{displaymath}
  \BE \int G_x(\eta-\Dirac_x)\,\eta(dx)
  = \int \BE G_x(\eta) \,\lambda(dx).
\end{displaymath}
In contrast, the centring involved in the pathwise construction of the
KS-integral in \eqref{e1.1} is random. As shown in \cite{LM21},
KS-integrals naturally appear in the construction of unbiased estimators
derived from Poisson hull operators.

In this paper we derive bounds for the Wasserstein and the Kolmogorov
distance between $\deltaKS(G)$ and a standard normal random variable.
Limit theorems for compensated stochastic Poisson integrals in the Wasserstein distance have
been studied in several papers by N.~Privault, assuming that $\BX$ is
the Euclidean space $\R^d$ with separate treatments of the cases $d=1$
in \cite{priv19} and $d\geq2$ in \cite{priv18}. In \cite{priv19} the
integrand is assumed to be adapted and in \cite{priv18} it is assumed
to be predictable and to have bounded support. In particular, the stochastic integral
coincides in both cases with the KS-integral. Under these assumptions,
the tools, based on derivation operators and Edgeworth-type
expansions, have resulted in bounds involving integrals of the third
power of $G$ and differential operators applied to $G$. In comparison,
our results apply to a general state space, are not restricted to
predictable (or adapted) integrands, and do not assume the support of
the integrand to be bounded in any sense.  Furthermore, our bounds are
given in terms of difference operators directly applied to the
integrand $G$, and are derived for both the Wasserstein and the
Kolmogorov distance. However, our bounds contain the integral of the
absolute value of $G$ to power 3, which may be larger than the
corresponding term in \cite{priv18}.  Our results are used in
\cite{LM21} to derive quantitative central limit theorems.

Let us compare our proof strategy with the standard approach for
  the normal approximation of Poisson functionals via the
  Malliavin--Stein method, which goes back to \cite{PSTU10} and is
  also employed in \cite{eich:tha16,las:pec:sch16,LPY20,schulte16}. To this end, we
  omit all technical assumptions and definitions (some will be given
  later). Let $F$ be a Poisson functional (a measurable function of $\eta$) and let $f$ be the solution
  of the associated Stein equation.
The identity $\deltaKS D=-L$, where $D$ is
  the difference operator and $L$ is the Ornstein--Uhlenbeck generator
  with its inverse $L^{-1}$, and integration by parts lead to
$$
\BE F f(F) = \BE \deltaKS(-DL^{-1}F) f(F) = \BE \int (-D_xL^{-1}F) D_xf(F) \, \lambda(dx).
$$
This step comes for the price of the term $D_xL^{-1}F$, which is often
difficult to evaluate and whose treatment is one of the main
achievements of \cite{las:pec:sch16}. For the special case
$F=\deltaKS(G)$ the identity $\deltaKS D=-L$ is not required. Instead,
an immediate integration by parts yields that 
\begin{equation}\label{eqn:inegration_by_parts_approach}
\BE F f(F) = \BE \deltaKS(G) f(\deltaKS(G)) = \BE \int G_x D_xf(\deltaKS(G)) \, \lambda(dx),
\end{equation}
avoiding the inverse Ornstein--Uhlenbeck generator.
We treat the KS-integrals that arise from the Taylor
expansion of $D_xf(\deltaKS(G))$ also by integration by parts,
so that our final bounds only involve $G$ and its difference operators
but no KS-integrals. This is a difference to \cite{Torrisi17}, 
where the argument in
\eqref{eqn:inegration_by_parts_approach} is used but no further
integration by parts.

Even though our proofs differ from previous
  works, one may wonder whether existing Malliavin-Stein
  bounds can be applied to $\deltaKS(G)$. As they do not involve the
  inverse Ornstein-Uhlenbeck generator, the results from
  \cite{las:pec:sch16} seem to be the best ones for the off-the-shelf
  use. They require only moments of the first and the second
  difference operator of the Poisson functional $F$, which one might
  also encounter when evaluating the bounds from
  \cite{eich:tha16,LPY20,PSTU10,schulte16}. In our case, this means
  that one has to control moments like $\BE\big[ G_x^4 \big]$,
  $\BE\big[ \deltaKS(D_xG)^4 \big]$ and
    $\BE\big[ \deltaKS(D_{x,y}^2G)^4 \big]$ for $x,y\in\BX$. Since we
  aim for bounds in terms of $G$ and its difference operators,
  one has to remove the KS-integrals. This can be achieved by fourfold
  integration by parts, but would lead to normal approximation bounds
  that are more involved than in the current paper and include
    even iterated integrals with roots of the inner integrals. We
  expect these results to yield the same rates of convergence as our
  approach but under stronger integrability assumptions. Instead, our
  approach is direct and leads to much simpler calculations.
In particular, it does not require the computation of expressions involving powers of
  KS-integrals apart from second moments.

Section~\ref{sec:main-results} presents our main results, which are
proved in Sections \ref{sec:Wasserstein} and \ref{sec:Kolmogorov}
separately for the Wasserstein and Kolmogorov distances, after
recalling necessary results and constructions from stochastic calculus
on Poisson spaces in Section~\ref{secpre}. We conclude with two
examples in Sections \ref{sec:Poisson_embedding} and
\ref{sec:integrals-functions} concerning some linear statistics of
point processes constructed via Poisson embeddings and Pareto optimal
points. 

\section{Main results}
\label{sec:main-results}

To state our results we need to introduce some notation. The
\emph{Wasserstein distance} between the laws of two integrable random variables
$X$ and $Y$ is defined by
\begin{align*}
  d_W(X,Y):=\sup_{h\in {\operatorname{\mathbf{Lip}}}(1)}
  \big|\BE[h(X)]-\BE[h(Y)]\big|,
\end{align*}
where ${\operatorname{\mathbf{Lip}}}(1)$ denotes the space of all
Lipschitz functions $h\colon\R\to\R$ with a Lipschitz constant at most
one.  The \emph{Kolmogorov distance} between the laws
of $X$ and $Y$ is given by
\begin{displaymath}
  d_K(X,Y):=\sup_{t \in \R} |\BP(X\le t)-\BP(Y\le t)|.
\end{displaymath}

Given a function $f\colon\bN\to\R$ and $x\in \BX$, the function
$D_xf\colon\bN\to\R$ is defined by
\begin{align}\label{def:D}
  D_xf(\mu):=f(\mu+\Dirac_x)-f(\mu),\quad \mu\in \bN.
\end{align} 
Then $D_x$ is known as the \emph{difference operator}. Iterating its
definition yields, for given $x,z,w\in \BX$, the second difference
operator $D^2_{x,z}$ and the third difference operator $D^3_{x,z,w}$ which can again be applied to functions $f$ as
above. For a function $G\colon \bN\times\BX\to\R$ (which maps
$(\mu,y)$ to $G_y(\mu)$) and $x,z,w\in\BX$ we let $D_x$, $D^2_{x,z}$ and $D^3_{x,z,w}$
act on $G_y(\cdot)$ so that it makes sense to talk about $D_xG_y(\mu)$, $D^2_{x,z}G_y(\mu)$
and $D^3_{x,z,w}G_y(\mu)$. Throughout the paper, 
we write shortly $G_y$ for $G_y(\eta)$ and similarly for difference operators.

We shall require the following integrability assumptions:
\begin{align}\label{eH2}
&\BE \int G^2_y\,\lambda(dy)<\infty,\\
\label{eH3}
&\BE \int (D_xG_y)^2\,\lambda^2(d(x,y))<\infty,\\
\label{eH4}
&\BE \int (D^2_{z,x} G_y)^2\,\lambda^3(d(x,y,z))<\infty,\\
\label{eH5}
&\BE \int (D^3_{w,z,x} G_y)^2\,\lambda^3(d(w,y,z))<\infty,\quad \lambda\text{-a.e.\ $x$}.
\end{align}
If \eqref{eH2} and \eqref{eH3} hold, 
it follows from \cite[Proposition~2.3]{las:pec:sch16} that the KS--integral $\deltaKS(G)$ of $G$ 
is defined as a Malliavin operator and satisfies
\begin{align}\label{evariance}
\BV \deltaKS(G) = \BE \int G_x^2 \, \lambda(dx) + \BE \int D_xG_y D_yG_x \, \lambda^2(d(x,y)).
\end{align}
In order to deal with the Kolmogorov distance, we also need to assume that
\begin{align}
& \BE \int |D_xG_y G_x|\,\lambda^2(d(x,y))<\infty \label{eH_mixed},\\
& \BE \int \big(D_z(G_x|G_x|)\big)^2 \, \lambda^2(d(x,z))<\infty \label{eH_mixed_2},\\
& \BE \int \bigg( \int D_z(D_xG_y D_y|G_x|) \, \lambda(dy) \bigg)^2 \, \lambda^2(d(x,z))<\infty \label{eH_mixed_3}
.
\end{align}

The following main result on the normal approximation of $\deltaKS(G)$
involves only the integrand $G$ and its first, second and third order
difference operators.  Throughout the paper we let $N$ denote a
standard normal random variable. Define
and denote
\begin{align*}
  T_1&:=\left(\BE \int \left(\int D_y(G_x^2) \,
      \lambda(dx)\right)^2\,  \lambda(dy)\right)^{1/2}, \allowdisplaybreaks\\
  T_2&:=\left(\BE \int \left( \int D_z(D_xG_y D_yG_x) \,
       \,\lambda^2(d(x,y))\right)^2 \,\lambda(dz)\right)^{1/2}, \allowdisplaybreaks \\
  T_3 & := \BE \int |G_x|^3 \, \lambda(dx), \allowdisplaybreaks \\
  T_4 & := \BE \int \Big( 3 \big|D_xG_y D_yG_x G_x\big| + \big|D_xG_y (D_yG_x)^2\big|
        + 2G_x^2 \big|D_xG_y\big| \\
     & \qquad \qquad + \big| (G_x + D_yG_x)  D_xG_y
       \big| \big(2|G_y|+|D_xG_y+D_yG_x|\big) \Big)
       \, \lambda^2(d(x,y)), \allowdisplaybreaks \\
  T_5 & := \BE \int \bigg( 2\big(|D_yG_z|+|D^2_{x,y}G_z|\big)
        \Big(|D_z\big((G_x + D_yG_x) D_xG_y\big)| + 2|(G_x + D_yG_x) D_xG_y| \Big) \\
     & \qquad \qquad +   |D_xG_z| \Big(|D_z\big(D_yG_x D_xG_y\big)|
       + 2|D_yG_x D_xG_y| \Big) \bigg) \, \lambda^3(d(x,y,z)), \allowdisplaybreaks \\
  T_6 & := \bigg(\BE \int \bigg( \int (G_x+D_yG_x) D_xG_y
        \, \lambda(dx)\bigg)^2\, \lambda(dy) \\
     & \qquad \qquad +\BE \int \bigg( \int D_z\big((G_x+D_yG_x)
       D_xG_y\big) \, \lambda(dx)\bigg)^2 \lambda^2(d(y,z)) \bigg)^{1/2}, \allowdisplaybreaks \\
  T_7 & := \bigg( \BE \int G_x^4 \, \lambda(dx)
        + \BE \int D_x(G_y|G_y|) D_y(G_x|G_x|) \, \lambda^2(d(x,y)) \bigg)^{1/2}, \allowdisplaybreaks \\
  T_8 & := \bigg( \BE \int \bigg( \int D_xG_y D_y|G_x|
        \, \lambda(dy) \bigg)^2\, \lambda(dx) \\
     & \qquad \qquad + \BE \int D_x\bigg( \int D_zG_y D_y|G_z|
       \, \lambda(dy) \bigg)  D_z\bigg( \int D_xG_y D_y|G_x|
       \lambda(dy) \bigg) \, \lambda^2(d(x,z))\bigg)^{1/2}, \allowdisplaybreaks \\
  T_9 & := \bigg( 3\BE \int (D_xG_y)^2 \big(D_y|G_x|+|G_x|\big)^2
        \, \lambda^2(d(x,y))  \\
     & \qquad \qquad + 3 \BE \int \Big( D_z\big( D_xG_y (D_y|G_x|+|G_x|)
       \big) \Big)^2 \, \lambda^3(d(x,y,z))  \\
     & \qquad \qquad + 2 \BE \int \Big( D^2_{z,w}\big( D_xG_y
       (D_y|G_x|+|G_x|)\big) \Big)^2 \, \lambda^4(d(x,y,z,w)) \bigg)^{1/2}. 
\end{align*}

\begin{theorem}\label{tmain}
Suppose that $G\colon\bN\times\BX\to\R $ satisfies the
assumptions \eqref{eH2}, \eqref{eH3}, \eqref{eH4} and \eqref{eH5}. Assume also that
  $\BE \deltaKS(G)^2=1$.  Then
  \begin{align}
    \label{eW}
    d_W(\deltaKS(G),N)\le T_1+T_2+T_3+T_4+T_5.
  \end{align}
If, additionally, \eqref{eH_mixed}, \eqref{eH_mixed_2} and \eqref{eH_mixed_3} are satisfied, then 
  \begin{align}
    \label{eK}
    d_K(\deltaKS(G),N)\le T_1+T_2+T_6+2(T_7+T_8+T_9).
  \end{align}
\end{theorem}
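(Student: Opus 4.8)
The plan is to deploy the Malliavin–Stein method for Poisson functionals, as developed in \cite{PSTU10,las:pec:sch16,LPY20,schulte16}. The starting point is the standard Stein bound: for a standard normal $N$ and an integrable $F$ with $\BE F=0$, one has $d_W(F,N)\le \sup |\BE[f'(F)-Ff(F)]|$ over solutions $f$ of the Stein equation with $\|f'\|_\infty\le 1$ and $\|f''\|_\infty\le 2$, and an analogous bound for $d_K$ with the appropriate (less regular) test functions. Applying this with $F=\delta(G)$, the key is to rewrite $\BE[\delta(G)f(\delta(G))]$ using the integration by parts (Mecke-type) formula for the KS-integral: since $\delta$ is the adjoint of the difference operator $D$, we get $\BE[\delta(G)f(\delta(G))]=\BE\int D_xf(\delta(G))\,G_x\,\lambda(dx)$. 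The difference operator does not act on $f(\delta(G))$ as a simple chain rule, so $D_x f(\delta(G)) = f(\delta(G)+D_x\delta(G)) - f(\delta(G))$, and a first-order Taylor expansion gives $D_x f(\delta(G)) = f'(\delta(G))\,D_x\delta(G) + R_x$, where the remainder $R_x$ is controlled by $\|f''\|_\infty (D_x\delta(G))^2$. This produces a main term $\BE[f'(\delta(G))\int D_x\delta(G)\,G_x\,\lambda(dx)]$ that we want to compare with $\BE[f'(\delta(G))]$, plus a remainder contributing terms like $T_3$ (after bounding $\int |D_x\delta(G)|^2|G_x|$ via a further expansion of $D_x\delta(G)$).

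The heart of the argument is to show that $\int D_x\delta(G)\,G_x\,\lambda(dx)$ is close in $L^1$ (or $L^2$) to the variance $\BV\delta(G)=1$. Using the pathwise formula \eqref{e1.1} and the product rule for $D_x$, one computes $D_x\delta(G)$ explicitly: it equals $G_x(\eta) + \int (D_xG_y)(\eta-\delta_y)\,\eta(dy) - \int D_xG_y\,\lambda(dy) + (\text{correction terms from }D_x\text{ acting on }\eta-\delta_x)$. Multiplying by $G_x$, integrating against $\lambda(dx)$, and taking expectations via the Mecke formula reproduces exactly the two terms in \eqref{evariance}. Thus it remains to bound the $L^2$-fluctuation of $\int D_x\delta(G)\,G_x\,\lambda(dx)$ around its mean. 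This is done by the Poincaré-type inequality for Poisson functionals: the variance of any $H\in L^2(\BP_\eta)$ is at most $\BE\int (D_zH)^2\,\lambda(dz)$. Applying this with $H=\int D_x\delta(G)\,G_x\,\lambda(dx)$ forces us to compute $D_z$ of this integral — this is where the second and third difference operators $D^2$ and $D^3$ of $G$ enter, and where the assumptions \eqref{eH4}, \eqref{eH5} are needed. Carefully expanding $D_z(\int D_x\delta(G)\,G_x\,\lambda(dx))$ via the product rule and the commutation relations for $D$ and $\delta$, then applying Cauchy–Schwarz and the Mecke formula term by term, yields precisely the contributions $T_1$, $T_2$, $T_4$, $T_5$ (and, for the Kolmogorov distance, $T_6$ through $T_9$, where the extra terms arise from the less smooth Stein solutions: the proof of \eqref{eK} requires handling $\BE[\I\{\delta(G)\le t\}\,\delta(G)]$-type quantities, which forces the appearance of $G_x|G_x|$ and $D_y|G_x|$ because absolute values replace squares in the relevant $L^2$-estimates).

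For the Kolmogorov bound \eqref{eK}, the additional difficulty is that the Stein solution for an indicator test function is only Lipschitz (not $C^1$), so the second-order Taylor step above is unavailable in the same form. Instead one uses the refined approach of \cite{schulte16,las:pec:sch16}, replacing the crude remainder estimate by one that keeps an extra difference operator, which is why the quantities $T_7,T_8,T_9$ involve $G_x|G_x|$ and one more level of difference operators than $T_1,\dots,T_5$ do; the assumptions \eqref{eH_mixed}, \eqref{eH_mixed_2}, \eqref{eH_mixed_3} guarantee these are finite. The main obstacle, and the bulk of the work in Sections \ref{sec:Wasserstein} and \ref{sec:Kolmogorov}, is the bookkeeping: expanding $D_z D_x\delta(G)$ and $D_z(D_xG_y\,D_yG_x)$ through the pathwise formula and the product rule generates many terms (products of $G$, $D_xG_y$, $D^2$ and $D^3$ applied in various argument slots), each of which must be matched against one of the $T_i$ after an application of the Mecke equation and Cauchy–Schwarz. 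I expect no single conceptual hurdle beyond this — the strategy is the by-now standard Malliavin–Stein scheme — but the combinatorial control of the remainder terms, and in particular verifying that the absolute-value substitutions needed for the Kolmogorov estimate close up against \eqref{eH_mixed_2} and \eqref{eH_mixed_3}, will be the delicate part.
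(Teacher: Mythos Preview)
Your high-level strategy (Stein's method, integration by parts for $\delta$, the commutation rule $D_x\delta(G)=G_x+\delta(D_xG)$, and Poincar\'e) matches the paper. But you misidentify the mechanism that produces $T_3,T_4,T_5$, and this matters because the specific form of those terms would not emerge from the route you describe.

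You propose to bound the $L^2$-fluctuation of $\int G_x D_x\delta(G)\,\lambda(dx)$ around $1$ by Poincar\'e, and claim that expanding $D_z$ of this integral yields $T_1,T_2,T_4,T_5$. That is not how the paper proceeds, and it would not give those terms. Poincar\'e is applied \emph{only} to the two pieces of the variance formula, $\int G_x^2\,\lambda(dx)$ and $\int D_xG_yD_yG_x\,\lambda^2(d(x,y))$, producing $T_1$ and $T_2$; note these are $L^2$-type quantities (square roots of expectations of squares). By contrast $T_3,T_4,T_5$ are $L^1$-type (expectations of absolute values with no outer square root), and that structural difference signals a different tool. The paper obtains them by \emph{repeatedly} applying the integration-by-parts inequality $|\BE H\delta(\tilde G)|\le \BE\int |\tilde G_y\,D_yH|\,\lambda(dy)$ (Lemma~\ref{l3.2}): after using the commutation rule to write $D_xX=G_x+\delta(D_xG)$, each appearance of a nested $\delta(D_xG)$ or $\delta(D^2_{x,y}G)$ inside an expectation is removed by one more IBP step, trading it for an extra difference operator on the remaining factor. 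Two iterations of this are what bring in $D^2G$ and $D^3G$ and produce the three-fold integral in $T_5$. If instead you applied Poincar\'e to the whole of $\int G_xD_xX$, the $D_z$ would hit $\delta(D_xG)$, giving $\delta(D^2_{x,z}G)$ by commutation, and after squaring you would be forced to bound $\BE\,\delta(D^2_{x,z}G)^2$ via the isometry---yielding an $L^2$-type expression quite different from $T_4,T_5$.

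For the Kolmogorov bound your description is also too vague at the crucial point. The paper does not simply use ``a refined remainder estimate''; it exploits that $u\mapsto uf_a(u)$ is non-decreasing and $u\mapsto\I\{u\le a\}$ is non-increasing to bound the nonsmooth remainder by $D_xX\cdot D_x(Xf_a(X)-\I\{X\le a\})\cdot|G_x|$, and then---again by IBP, now iterated twice via a dedicated lemma on $\delta_x(\delta_y(h(x,y)))$---rewrites the resulting expectations as $\BE[(Xf_a(X)-\I\{X\le a\})\cdot\delta(\cdots)]$, which is finally bounded using $|uf_a(u)-\I\{u\le a\}|\le 2$ and the $L^2$-isometry for $\delta$. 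This is where $T_7,T_8,T_9$ come from, and why they are $L^2$-type again. The assumptions \eqref{eH_mixed}--\eqref{eH_mixed_3} are needed precisely to justify these IBP steps, not to ``close up absolute-value substitutions'' as you suggest.
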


We say that the functional $G$ satisfies the \emph{cyclic condition
of order two} if
\begin{equation}
  \label{eq:3}
  D_xG_yD_yG_x=0 \quad \text{a.s.} \quad \text{for} \quad  \lambda^2\text{-a.e.} \quad  (x,y)\in\BX^2,
\end{equation}
see \cite{Privault12}, where such
conditions were used to simplify moment formulae for the
KS-integral. Note that \eqref{eq:3} always holds if the functional
$G$ is predictable, that is, the carrier space is equipped with a
strict partial order $\prec$ and $G_y(\eta)$ depends only on $\eta$ restricted
to $\{x\in\BX:x\prec y\}$. If \eqref{eq:3} holds, then also 
$$
D_x|G_y| D_y|G_x| = 0 \quad \text{and} \quad D_xG_y D_y|G_x| = 0 \quad \text{a.s.} \quad \text{for} \quad \lambda^2\text{-a.e.} \quad (x,y)\in\BX^2,
$$
since
\begin{align*}
0  \leq\big| D_x|G_y| D_y|G_x| \big| =  \big| D_x|G_y|\big|  \big|D_y|G_x| \big| \leq |D_xG_y|  \big|D_y|G_x| \big|  \leq |D_xG_y| |D_yG_x| = 0.
\end{align*}
In view of this, under the cyclic condition, the bounds from
Theorem~\ref{tmain} simplify as follows.

\begin{corollary}\label{cor:cyclic}
  Assume that the cyclic condition \eqref{eq:3} holds,
  and the assumptions of Theorem~\ref{tmain} are maintained. Then the
  bounds \eqref{eW} and \eqref{eK} hold with $T_2=T_8=0$, and
  \begin{align*}
    T_4&=\BE \int  \Big(2G_x^2 |D_xG_y|
         + \big| G_x  D_xG_y\big| \big(2|G_y|+|D_xG_y|\big)\Big)
         \, \lambda^2(d(x,y)),\\
    T_5&=\BE \int 2\Big(|D_yG_z|+|D^2_{x,y}G_z|\Big)
         \Big(\big|D_z(G_x D_xG_y)\big| + 2 |G_xD_xG_y| \Big)
         \, \lambda^3(d(x,y,z)),\\
    T_6&=\bigg( \BE \int \bigg( \int G_x D_xG_y \, \lambda(dx)\bigg)^2 \, \lambda(dy) 
         + \BE \int \bigg( \int D_z\big(G_x D_xG_y\big)
         \, \lambda(dx)\bigg)^2 \, \lambda^2(d(y,z)) \bigg)^{1/2},\\
    T_7&= \bigg( \BE \int G_x^4 \, \lambda(dx) \bigg)^{1/2},\\
    T_9&= \bigg( 3\BE \int (D_xG_y)^2 G_x^2 \, \lambda^2(d(x,y)) + 3 \BE \int \big( D_z( D_xG_y |G_x| )  \big)^2 \, \lambda^3(d(x,y,z)) \\
        & \qquad \qquad + 2 \BE \int \big( D^2_{z,w}(D_xG_y |G_x|) \big)^2 \, \lambda^4(d(x,y,z,w)) \bigg)^{1/2}.
  \end{align*}
\end{corollary}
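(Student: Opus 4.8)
The plan is to derive the corollary from Theorem~\ref{tmain} by substituting the cyclic condition \eqref{eq:3} into the explicit formulae for $T_2,\dots,T_9$. The only point that is not pure bookkeeping is that $T_2$, $T_5$, $T_6$, $T_8$ and $T_9$ involve the outer difference operators $D_z$ and $D^2_{z,w}$, and once these are written out via \eqref{def:D} one meets the products $D_yG_x D_xG_y$ and $D_xG_y D_y|G_x|$ evaluated not at $\eta$ but at the shifted configurations $\eta+\delta_z$ and $\eta+\delta_z+\delta_w$. So the first task is to check that \eqref{eq:3} survives the introduction of these auxiliary points.

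I would do this with the (multivariate) Mecke formula. Since $D_z(D_yG_x D_xG_y)(\eta)=D_yG_x(\eta+\delta_z)D_xG_y(\eta+\delta_z)-D_yG_x(\eta)D_xG_y(\eta)$ and the second term is a.s.\ zero for $\lambda^2$-a.e.\ $(x,y)$ by \eqref{eq:3}, only the first needs attention; applying Mecke to the nonnegative functional $\mu\mapsto\int\I\{D_yG_x(\mu)D_xG_y(\mu)\neq0\}\,\lambda^2(d(x,y))$ rewrites its $\lambda(dz)$-integral as an $\eta(dz)$-integral of a functional that is a.s.\ zero by \eqref{eq:3}, whence $D_yG_x(\eta+\delta_z)D_xG_y(\eta+\delta_z)=0$ a.s.\ for $\lambda^3$-a.e.\ $(x,y,z)$ and therefore $D_z(D_yG_x D_xG_y)=0$ a.s.; the bivariate Mecke formula applied to the same functional gives, in the same way, $D^2_{z,w}(D_yG_x D_xG_y)=0$ a.s.\ for $\lambda^4$-a.e.\ $(x,y,z,w)$. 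Running the identical argument from the identity $D_xG_y D_y|G_x|=0$ already established in the text yields $D_z(D_xG_y D_y|G_x|)=0$ and $D^2_{z,w}(D_xG_y D_y|G_x|)=0$ a.s.\ for a.e.\ indices.

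Next I would record the elementary fact that $t\mapsto t|t|$ is a strictly increasing, hence injective, map of $\R$, so that $D_x(f|f|)(\mu)=0$ precisely when $D_xf(\mu)=0$. In particular the integrand of the cross term $\BE\int D_x(G_y|G_y|)\,D_y(G_x|G_x|)\,\lambda^2(d(x,y))$ in $T_7$ is supported on $\{D_xG_y\neq0\}\cap\{D_yG_x\neq0\}=\{D_xG_yD_yG_x\neq0\}$, a $\BP$-null event for $\lambda^2$-a.e.\ $(x,y)$; hence this term vanishes and only $\BE\int G_x^4\,\lambda(dx)$ remains inside $T_7$.

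With these facts the rest is algebra. In $T_8$ the identity $D_xG_y D_y|G_x|=0$ gives $\int D_xG_y D_y|G_x|\,\lambda(dy)=0$ a.s.\ (Fubini), and, by the second paragraph, the same after applying $D_x$ or $D_z$, so both summands of $T_8$ vanish; similarly $T_2=0$, since its inner integrand $D_z(D_xG_yD_yG_x)$ is a.s.\ zero. In $T_4$, $T_5$ and $T_6$ one uses $(G_x+D_yG_x)D_xG_y=G_xD_xG_y$ a.s.\ and $D_z\big((G_x+D_yG_x)D_xG_y\big)=D_z(G_xD_xG_y)$ a.s.\ (the difference being $D_z(D_yG_xD_xG_y)=0$), drops $3|D_xG_yD_yG_x G_x|$ and $|D_xG_y(D_yG_x)^2|$ (both carrying the factor $D_xG_yD_yG_x$) together with the summand $|D_xG_z|\big(|D_z(D_yG_xD_xG_y)|+2|D_yG_xD_xG_y|\big)$ of $T_5$, and on the event $\{D_xG_y\neq0\}$ --- the only place the remaining $T_4$-term is nonzero --- replaces $|D_xG_y+D_yG_x|$ by $|D_xG_y|$ since there $D_yG_x=0$. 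In $T_9$, $D_xG_y D_y|G_x|=0$ together with its $D_z$- and $D^2_{z,w}$-versions collapses the blocks $D_xG_y(D_y|G_x|+|G_x|)$ to $D_xG_y|G_x|$, $D_z(D_xG_y|G_x|)$ and $D^2_{z,w}(D_xG_y|G_x|)$. Collecting the surviving terms produces the displayed $T_4,\dots,T_9$, the leading factors $2$ in the stated $T_7$ and $T_9$ being conservative (they only enlarge the bound, so the conclusion of Theorem~\ref{tmain} is preserved). I expect the only genuine obstacle to be the robustness step of the second paragraph --- ensuring the cyclic identity persists under the extra points created by the outer difference operators --- with everything else reducing to routine substitution and the small case distinction inside $T_4$.
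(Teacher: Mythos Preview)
Your proposal is correct and follows the same route as the paper, which simply states (without a separate proof) that the corollary follows by substituting the cyclic identity and its consequence $D_xG_y\,D_y|G_x|=0$ into the expressions for the $T_i$. Your treatment is in fact more complete: the Mecke argument showing that these identities persist at the shifted configurations $\eta+\delta_z$ and $\eta+\delta_z+\delta_w$ (needed for the outer difference operators in $T_2,T_5,T_6,T_8,T_9$) is a genuine point the paper leaves implicit, and your reading of the leading factors $2$ in the stated $T_7$ and $T_9$ as merely conservative upper bounds is also correct.
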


\begin{remark}
  Assuming that $G_x(\eta)\equiv f(x)$ does not depend on $\eta$ and
  that $f\in L^2(\lambda)$, $\deltaKS(G)$ is the first Wiener--It\^o
  integral $I_1(f)$ of $f$ (see e.g.\ \cite[Chapter
  12]{LastPenrose17}).  In this case, Theorem \ref{tmain} yields the
  classical Stein bounds for the Wasserstein and the Kolmogorov
  distance,
  \begin{align*}
    d_W(I_1(f),N)
    \le \int |f(x)|^3 \, \lambda(dx)
  \end{align*}
  and
  \begin{align*}
    d_K(I_1(f),N)
    \le 2\bigg(\int f(x)^4 \, \lambda(dx) \bigg)^{1/2},
  \end{align*}
  see e.g.\ \cite[Corollary~3.4]{PSTU10} and
  \cite[Example~1.3]{las:pec:sch16}.
\end{remark}

\begin{remark}
  The paper \cite{Torrisi17} studies normal and Poisson
    approximation of innovations of general point processes with
    Papangelou conditional intensities, which include KS-integrals on
    the Poisson space. More precisely, Theorem~3.1 and Corollary~3.2
    in \cite{Torrisi17} (with $\pi=1$ there) provide bounds on the
    Wasserstein distance between a KS-integral and a standard normal
    random variable. In contrast to our main results, the
      bound in Theorem~3.1 from
    \cite{Torrisi17} contains still KS-integrals as integration by
    parts is employed only once. Proceeding there with further
    integrations by parts might be challenging since one of the
    KS-integrals is within an absolute value. The bound on the
    Wasserstein distance presented in
    Theorem~3.1 is evaluated in Corollary~3.2, but the resulting bound
    might not always behave as desired 
for a limit theorem. The first term on the
    right-hand side can be bounded from below by
$$
\bigg|1- \BE \int G_x^2 \, \lambda(dx)\bigg|,
$$ 
which does not become small if the KS-integral has variance one and
the second term in \eqref{evariance} has a non-vanishing contribution
(see Example~6.5 for such a situation). The third term contains only a
product of two factors, which could be not sufficient
if one rescales by the
standard deviation of the KS-integral (see e.g.\ the situation
discussed in Remarks \ref{rpredict} and
\ref{rpredict3} under the additional assumptions that $u$ is constant and $\varphi$ is translation
invariant in its first argument).
\end{remark}

\begin{remark}
  In view of the works \cite{PZ,SY}, we expect that our results can be
  extended to the multivariate normal approximation of vectors of
  KS-integrals for distances based on smooth test functions and
  for the so-called $d_{\mathrm{convex}}$-distance under suitable assumptions.
\end{remark}

\section{Preliminaries}\label{secpre}

In this section we provide some basic properties of the difference
operator $D$ and the KS-integral $\deltaKS$. First of all, we recall
from \cite{Last16} the definitions of $D$ and $\deltaKS$ as {\em
  Malliavin operators}.  These definitions are based on $n$-th order
Wiener--It\^o integrals $I_n$, $n\in\N$; see also
\cite[Chapter~12]{LastPenrose17}. For symmetric functions
$f\in L^2(\lambda^n)$ and $g\in L^2(\lambda^m)$ with
$n,m\in\N$ we have
\begin{equation}\label{eqn:Isometry}
  \BE I_n(f) I_m(g) = \mathbf{1}\{n=m\} n! \int f(x) g(x) \, \lambda^n(dx).
\end{equation}
We use the convention $I_0(c)=c$ for $c\in\R$. Any
$H\in L^2(\BP_\eta)$ admits a \emph{chaos expansion}
\begin{align}\label{echaos}
  H=\sum^\infty_{n=0} I_n(h_n),
\end{align}
where we recall our (somewhat sloppy) convention $H\equiv H(\eta)$,
and where $h_0=\BE H$ and the $h_n$, $n\in\N$, are symmetric elements of
$L^2(\lambda^n)$. Here and in the following, we mean by series of
Wiener--It\^o integrals their $L^2$-limit, whence all identities
involving such sums hold almost surely. Then $H$ is in the 
{\em domain $\dom D$ of the difference operator $D$} 
(in the sense of a Malliavin operator)  if
\begin{align*}
  \sum^\infty_{n=1}nn!\int h_n(x_1,\ldots,x_n)^2\,\lambda^n(d(x_1,\ldots,x_n))<\infty.
\end{align*}
In this case one has
\begin{align*}
  D_xH=\sum^\infty_{n=1}nI_{n-1}(h_n(x,\cdot)),\quad \lambda\text{-a.e.} \ x\in \BX,
\end{align*}
see \cite[Theorem~3]{Last16}, i.e., the pathwise defined difference
operator from \eqref{def:D} can be represented in terms of the chaos
expansion \eqref{echaos}.  For $H\in L^2(\BP_\eta)$ the relations
$H\in \dom D$ and
\begin{displaymath}
  \BE \int (D_xH)^2\,\lambda(dx)<\infty
\end{displaymath}
are equivalent; see \cite[Eq.~(48)]{Last16}.
The (pathwise defined) difference operator satisfies
the product rule   
\begin{equation} \label{eq:1}
  D_x(HH')=(D_xH)(H+D_xH')+HD_xH',\quad x\in\BX,
\end{equation}
for measurable $H,H'\colon\bN\to\R$.

Now  let $G\colon \bN\times\BX\to\R$ be a measurable function
such that $G_x\equiv G(\cdot,x)\in L^2(\BP_\eta)$ for $\lambda$-a.e.\ $x$.
Then there exist measurable functions
$g_n\colon \BX^{n+1}\to\R$, $n\in\N_0$, such that 
\begin{align}\label{2.4}
  G_x=\sum_{n=0}^\infty I_n(g_n(x,\cdot)), \quad 
  \lambda\text{-a.e.\ $x\in\BX$.}
\end{align}
One says that $G$ is in the domain $\dom\deltaKS$ of the KS-integral $\deltaKS$ if
\begin{equation*}
  \sum_{n=0}^\infty (n+1)! \int \tilde{g}_n(\mathbf{x})^2\,\lambda^{n+1}(d\mathbf{x})<\infty,
\end{equation*}
where $\tilde{g}_n\colon \BX^{n+1}\to\R$ is the symmetrisation of $g_n$.
In this case the KS-integral of $G$ is defined by
\begin{align}\label{eKSI}
  \deltaKS(G):=\sum_{n=0}^\infty I_{n+1}(\tilde{g}_n).
\end{align}
We have $\BE\deltaKS(G)=0$. If $G\in\dom\deltaKS\cap L^1(\BP_\eta\otimes\lambda)$,
then $\deltaKS(G)$ is indeed given by the pathwise formula \eqref{e1.1};
see \cite[Theorem~6]{Last16}. If $G\in L^2(\BP_\eta\otimes\lambda)$, which is \eqref{eH2},
and if \eqref{eH3} holds,
then $G\in\dom\deltaKS$ and
\begin{equation} \label{eq:4}
\BE\deltaKS(G)^2
=\BE\int G_x^2\,\lambda(dx)+\BE\int D_xG_y D_yG_x \,\lambda^2(d(x,y)), 
\end{equation}
see \cite[Proposition~2.3]{las:pec:sch16} or
\cite[Theorem~5]{Last16}. Thus, the assumptions \eqref{eH2} and
\eqref{eH3} on $G$ in Theorem \ref{tmain} are sufficient to guarantee
that $G\in\operatorname{dom}\deltaKS$.

For $H\in\dom D$ and $G\in\dom \deltaKS$ we have the
important \emph{integration by parts} formula
\begin{equation}\label{e2.2}
  \BE H \deltaKS(G)=\BE\int G_x D_xH \,\lambda(dx);
\end{equation}
see e.g.\ \cite[Theorem~4]{Last16}.  Unfortunately, the assumption
$H\in\dom D$ is often not easy to check, and the sufficient conditions
given above lead to rather strong integrability assumptions. Instead
we shall often use the following two results.

\begin{lemma}\label{l3.1}
  Suppose that $G$ satisfies \eqref{eH2} and \eqref{eH3}, and let
  $H\in L^2(\BP_\eta)$ be such that $D_xH\in L^2(\BP_\eta)$ for
  $\lambda$-a.e.\ $x$. Then
  \begin{align}\label{efin}
    \int |\BE D_xH G_x|\,\lambda(dx)<\infty
  \end{align}
  and \eqref{e2.2} holds.
\end{lemma}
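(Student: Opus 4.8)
The plan is to reduce the claim to the integration-by-parts formula \eqref{e2.2}, which is already available for $H\in\dom D$, by a truncation argument in the chaos expansion of $H$. First I would write the chaos expansion $H=\sum_{n\ge0}I_n(h_n)$ and set $H^{(N)}:=\sum_{n=0}^N I_n(h_n)$, the truncation at level $N$. Each $H^{(N)}$ is a finite sum of Wiener--It\^o integrals, hence lies in $\dom D$, so \eqref{e2.2} applies: $\BE H^{(N)}\delta(G)=\BE\int G_x\,D_xH^{(N)}\,\lambda(dx)$. The goal is then to pass to the limit $N\to\infty$ on both sides. The left-hand side is immediate, since $H^{(N)}\to H$ in $L^2(\BP_\eta)$ and $\delta(G)\in L^2(\BP_\eta)$ (which holds because \eqref{eH2} and \eqref{eH3} guarantee $G\in\dom\delta$ with $\BE\delta(G)^2$ given by \eqref{eq:4}), whence $\BE H^{(N)}\delta(G)\to\BE H\delta(G)$ by Cauchy--Schwarz.

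The substantive work is the right-hand side. Here I would use the pathwise identity $D_xH^{(N)}=\sum_{n=1}^N n\,I_{n-1}(h_n(x,\cdot))$ together with the fact that $D_xH^{(N)}$ is the conditional-expectation-type projection of $D_xH$ onto the first $N-1$ chaoses in the relevant sense; more precisely, for $\lambda$-a.e.\ $x$ one has $D_xH\in L^2(\BP_\eta)$ by hypothesis with chaos expansion $D_xH=\sum_{n\ge1}n\,I_{n-1}(h_n(x,\cdot))$, so $D_xH^{(N)}$ is exactly the partial sum and $\BE\,(D_xH-D_xH^{(N)})^2=\sum_{n>N}n^2 n!\,\int h_n(x,\mathbf y)^2\,\lambda^{n-1}(d\mathbf y)\to0$ as $N\to\infty$, monotonically. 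The plan is to dominate $\int|\BE[G_x D_xH^{(N)}]|\,\lambda(dx)$ uniformly in $N$ and then invoke dominated convergence. By the isometry \eqref{eqn:Isometry}, writing $G_x=\sum_{m\ge0}I_m(g_m(x,\cdot))$, we get $\BE[G_x D_xH^{(N)}]=\sum_{n=1}^N n\,n!\int g_{n-1}(x,\mathbf y)\,\tilde h_n(x,\mathbf y)\,\lambda^{n-1}(d\mathbf y)$ — actually one must be careful to symmetrise, but the symmetrisation of $g_{n-1}(x,\cdot)$ only helps — and Cauchy--Schwarz in the $\lambda^{n-1}$ and then in the chaos-index/$\lambda(dx)$ variables gives
\begin{align*}
\int |\BE[G_x D_xH^{(N)}]|\,\lambda(dx)
\le \Big(\BE\int (D_xH)^2\,\lambda(dx)\Big)^{1/2}\Big(\BE\int G_x^2\,\lambda(dx)+\BE\int D_xG_y\,D_yG_x\,\lambda^2(d(x,y))\Big)^{1/2},
\end{align*}
where the second factor is finite by \eqref{eH2}, \eqref{eH3} and \eqref{eq:4} (this is precisely $\big(\BE\delta(G)^2\big)^{1/2}$), and the first factor is finite by the assumption $D_xH\in L^2(\BP_\eta)$ combined with, say, the equivalence recalled after \eqref{echaos} — or one simply notes the bound is in terms of $\|D H\|_{L^2(\BP_\eta\otimes\lambda)}$, which I would need to argue is finite; this is the one place where a little care is required, because the pointwise hypothesis ``$D_xH\in L^2(\BP_\eta)$ for $\lambda$-a.e.\ $x$'' does not a priori give $\int\BE(D_xH)^2\,\lambda(dx)<\infty$.

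I expect this last point to be the main obstacle: the hypothesis is genuinely weaker than $H\in\dom D$, so the naive bound above may be infinite. The remedy is to avoid the global $L^2$-bound and instead estimate $\int|\BE[G_x D_xH^{(N)}]|\,\lambda(dx)$ using the decomposition of $\delta(G)$ into its pathwise pieces: by the Mecke equation, $\BE[G_x D_xH^{(N)}]$ relates to $\BE[H^{(N)}(\eta+\delta_x)G_x(\eta+\delta_x)-H^{(N)}(\eta)G_x(\eta)]$-type quantities, and one can run the argument with $H^{(N)}$ replaced by a joint truncation of $H^{(N)}G_x$, bounding everything by $\BE\int|G_x|\,|D_xH|\,\lambda(dx)$ and then Cauchy--Schwarz per-$x$: $|\BE[G_x D_xH]|\le(\BE G_x^2)^{1/2}(\BE(D_xH)^2)^{1/2}$, integrate in $x$, apply Cauchy--Schwarz again in $L^2(\lambda)$, and use \eqref{eH2} plus $\int\BE(D_xH)^2\,\lambda(dx)\le$ --- no, that reintroduces the same quantity. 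The cleanest fix, which I would adopt, is: first establish \eqref{efin} directly as $\int|\BE[D_xH\,G_x]|\,\lambda(dx)\le\sup_x\big(\BE(D_xH)^2\big)^{1/2}\cdot\big(\int(\BE G_x^2)\,\lambda(dx)\big)^{1/2}$ only when that helps, but more robustly show \eqref{efin} by noting $\BE[D_xH\,G_x]$ depends on $H$ only through finitely many chaoses once $G_x$'s expansion is truncated, reduce to $G$ finite-rank (dense in the relevant norm by \eqref{eH2}--\eqref{eH3}), for which $D_xH\,G_x$ manipulations are legitimate, and pass to the limit in $G$ using \eqref{eq:4}. Having secured \eqref{efin}, the identity \eqref{e2.2} then follows from the truncated version $\BE H^{(N)}\delta(G)=\BE\int G_x D_xH^{(N)}\,\lambda(dx)$ by dominated convergence on the right (the integrands $\BE[G_x D_xH^{(N)}]$ converge pointwise in $x$ to $\BE[G_x D_xH]$ by the per-$x$ $L^2$-convergence $D_xH^{(N)}\to D_xH$, and are dominated via the finite-rank reduction), and the $L^2$-convergence on the left.
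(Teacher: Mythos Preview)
Your overall strategy---truncate the chaos expansion of $H$, apply \eqref{e2.2} to $H^{(N)}\in\dom D$, then pass to the limit---is exactly the route the paper takes (referring to \cite{schulte16} for the limiting step). You also correctly identify the real obstacle: the hypothesis ``$D_xH\in L^2(\BP_\eta)$ for $\lambda$-a.e.\ $x$'' does \emph{not} give $\int\BE(D_xH)^2\,\lambda(dx)<\infty$, so any bound that lands on $\|DH\|_{L^2(\BP_\eta\otimes\lambda)}$ is doomed. But none of your proposed workarounds (the $\sup_x$ bound, the Mecke detour, the finite-rank reduction in $G$) actually closes this gap, and the proposal ends without a working argument for \eqref{efin}.

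The missing idea is a specific Cauchy--Schwarz allocation at the level of chaos coefficients. Using the isometry \eqref{eqn:Isometry} and the identification $h'_n(x,\cdot)=(n+1)h_{n+1}(x,\cdot)$ for the kernels of $D_xH$, one has
\[
\BE[D_xH\,G_x]=\sum_{n\ge 0}(n+1)!\int h_{n+1}(x,\mathbf y)\,g_n(x,\mathbf y)\,\lambda^n(d\mathbf y).
\]
Applying Cauchy--Schwarz first in $\lambda^n$, then in the sum over $n$ together with $\lambda(dx)$, gives
\[
\int|\BE[D_xH\,G_x]|\,\lambda(dx)
\le\Big(\sum_{n\ge0}(n+1)!\,\|h_{n+1}\|^2_{L^2(\lambda^{n+1})}\Big)^{1/2}
\Big(\sum_{n\ge0}(n+1)!\,\|g_n\|^2_{L^2(\lambda^{n+1})}\Big)^{1/2}.
\]
The point is that the first factor equals $\sum_{m\ge1}m!\,\|h_m\|^2\le\BE H^2<\infty$: only the $L^2$-norm of $H$ is needed, not $\dom D$ membership. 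The extra factor of $(n+1)$ has been pushed entirely onto the $G$ side, where $\sum_n(n+1)!\,\|g_n\|^2=\sum_n n!\,\|g_n\|^2+\sum_n n\cdot n!\,\|g_n\|^2$ is finite precisely by \eqref{eH2} and \eqref{eH3}. This is the inequality you were looking for; once \eqref{efin} is established this way, your truncation-and-limit argument goes through without further difficulty.
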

\begin{proof}
  The proof is essentially that of Lemma 2.3 in \cite{schulte16}. For
  the convenience of the reader we provide the main arguments.  Since
  $H\in L^2(\BP_\eta)$, we can represent $H$ as in
  \eqref{echaos}. Similarly, we can write
  \begin{align*}
    D_xH=\sum^\infty_{n=0} I_n(h'_n(x,\cdot)), \quad \lambda\text{-a.e.} \ x,
  \end{align*}
  where the measurable functions $h'_n\colon \BX^{n+1}\to \R$ are in
  the last $n$ coordinates symmetric and square integrable with
  respect to $\lambda^n$. In fact, it follows from
  \cite[Theorem~18.10]{LastPenrose17} that we can choose
  \begin{align*}
    h'_n(x,\mathbf{x})=(n+1)h_{n+1}(x,\mathbf{x}).
  \end{align*}
  Combining this with \eqref{2.4} and \eqref{eqn:Isometry}, we obtain
  \begin{align*}
    \BE D_xH G_x=\sum^\infty_{n=0}(n+1)!
    \int h_{n+1}(x,\mathbf{x})g_{n}(x,\mathbf{x})\,\lambda^n(d\mathbf{x})
  \end{align*}
  for $\lambda$-a.e.\ $x$. The Cauchy--Schwarz inequality (applied
  twice) yields
  \begin{align*}
    \int &|\BE D_xH G_x|\,\lambda(dx)\\
         &\le \Bigg(\sum^\infty_{n=0}(n+1)!\int
           h_{n+1}(\mathbf{x})^2\,\lambda^{n+1}(d\mathbf{x})\Bigg)^{1/2}
           \Bigg(\sum^\infty_{n=0}(n+1)!\int
           g_{n}(\mathbf{x})^2\,\lambda^{n+1}(d\mathbf{x})\Bigg)^{1/2}.
  \end{align*}
  Since $\BE H^2<\infty$, the first factor on the above
  right-hand side is finite.  By assumption \eqref{eH3}, the second
  factor is finite as well; see the proof of
  \cite[Theorem~5]{Last16}. Hence \eqref{efin} holds.  The remainder
  of the proof is as in \cite{schulte16}.
\end{proof}

\begin{lemma}\label{l3.2}
  Suppose that $G$ satisfies \eqref{eH2} and \eqref{eH3}, and let
  $H\colon \bN\to\R$ be a measurable function satisfying
  \begin{align}\label{eGH}
    \BE |H \deltaKS(G)|<\infty.
  \end{align}
  Then
  \begin{align}\label{epartialinequality}
    |\BE H \deltaKS(G)|\le \BE \int |D_xH G_x|\,\lambda(dx).
  \end{align}
\end{lemma}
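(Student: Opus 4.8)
The plan is to deduce Lemma~\ref{l3.2} from Lemma~\ref{l3.1} by a truncation argument that replaces the possibly ill-behaved $H$ with bounded functions to which the integration by parts formula \eqref{e2.2} applies. First I would fix, for $M>0$, the truncation $H_M := (H\wedge M)\vee(-M)$, so that $H_M$ is bounded and hence lies in $L^2(\BP_\eta)$; moreover $D_xH_M$ is bounded for every $x$ and therefore also in $L^2(\BP_\eta)$. Thus $H_M$ satisfies the hypotheses of Lemma~\ref{l3.1}, which gives $\BE H_M\delta(G)=\BE\int G_x D_xH_M\,\lambda(dx)$ together with absolute integrability of the right-hand side. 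Taking absolute values yields
\begin{align*}
  |\BE H_M\delta(G)|\le \BE\int |D_xH_M\, G_x|\,\lambda(dx).
\end{align*}

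Next I would pass to the limit $M\to\infty$ on both sides. On the left, $H_M\to H$ pointwise with $|H_M\delta(G)|\le |H\delta(G)|$, which is integrable by the assumption \eqref{eGH}, so dominated convergence gives $\BE H_M\delta(G)\to\BE H\delta(G)$. For the right-hand side the key observation is the pointwise bound $|D_xH_M(\mu)|=|H_M(\mu+\delta_x)-H_M(\mu)|\le |H(\mu+\delta_x)-H(\mu)|=|D_xH(\mu)|$, valid because the truncation map $t\mapsto (t\wedge M)\vee(-M)$ is $1$-Lipschitz; hence $|D_xH_M\, G_x|\le |D_xH\, G_x|$ for all $x$. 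Combined with $D_xH_M\to D_xH$ pointwise, dominated convergence (if $\BE\int|D_xH\,G_x|\,\lambda(dx)<\infty$) or Fatou's lemma (in general) gives
\begin{align*}
  \limsup_{M\to\infty}\BE\int |D_xH_M\, G_x|\,\lambda(dx)\le \BE\int |D_xH\, G_x|\,\lambda(dx),
\end{align*}
and therefore $|\BE H\delta(G)|\le \BE\int|D_xH\,G_x|\,\lambda(dx)$, which is \eqref{epartialinequality}. Note the inequality is trivially true when the right-hand side is infinite, so one may assume it finite and then dominated convergence applies directly.

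The main obstacle is justifying the interchange of limit and expectation on the right-hand side without an a priori integrability hypothesis on $\int |D_xH\,G_x|\,\lambda(dx)$: one must argue via the Lipschitz domination $|D_xH_M|\le |D_xH|$ so that Fatou's lemma applies and produces exactly the claimed bound (rather than something weaker). A secondary technical point is confirming that $H_M\in\operatorname{dom}D$ in the pathwise sense used by Lemma~\ref{l3.1} — but since $H_M$ is bounded it is in $L^2(\BP_\eta)$, and $D_xH_M$ is bounded hence in $L^2(\BP_\eta)$ for every $x$, so Lemma~\ref{l3.1} applies verbatim with $G$ still only required to satisfy \eqref{eH2} and \eqref{eH3}. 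No further properties of $\delta(G)$ are needed beyond $G\in\operatorname{dom}\delta$, which follows from \eqref{eH2} and \eqref{eH3} as recalled after \eqref{eq:4}.
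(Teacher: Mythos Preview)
Your proposal is correct and follows essentially the same route as the paper: truncate $H$ to $H_M=(H\wedge M)\vee(-M)$, apply Lemma~\ref{l3.1} to the bounded $H_M$, use the Lipschitz bound $|D_xH_M|\le|D_xH|$, and pass to the limit on the left via dominated convergence with \eqref{eGH}. The only cosmetic difference is that the paper immediately bounds the right-hand side by $\BE\int|D_xH\,G_x|\,\lambda(dx)$ using the Lipschitz domination (so no limit argument is needed there), whereas you phrase it as a $\limsup$ via Fatou; both are fine.
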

\begin{proof} 
If $H$ is bounded,
then \eqref{epartialinequality}  follows from Lemma \ref{l3.1}.
In the general case we set $H_r:=\max\{\min\{H,r\}),-r\}$ for $r>0$. Then 
\eqref{epartialinequality} holds with $H_r$ instead of $H$. Hence, the observation that $|D_xH_r|\leq |D_xH|$ for $x\in\BX$ (see \cite[Exercise~18.4]{LastPenrose17}) yields that
\begin{align*}
|\BE H_r \deltaKS(G)|\le \BE \int |D_xH G_x|\,\lambda(dx).
\end{align*}
By \eqref{eGH} we can conclude the assertion from dominated convergence.
\end{proof}

We often need the following (basically) well-known
commutation rule for the KS-integral. For the pathwise defined
version \eqref{e1.1} this rule follows (under suitable integrability
assumptions) by direct calculation.

\begin{lemma}\label{l3.3}
  Suppose that $G$ satisfies \eqref{eH2}, \eqref{eH3} and
  \eqref{eH4}. Then $\deltaKS(G)\in\dom D$ and $D_xG\in\dom\deltaKS$ for
  $\lambda$-a.e.\ $x$ as well as
  \begin{equation} \label{eq:delta1}
    D_x\deltaKS(G)=G_x+\deltaKS(D_xG)\quad \text{a.s.},\, \lambda\text{-a.e.\ $x\in\BX$}.
  \end{equation}
\end{lemma}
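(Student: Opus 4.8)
The plan is to prove \eqref{eq:delta1} by matching Wiener chaos expansions, after first checking the two asserted domain memberships. Following Section~\ref{secpre}, write $G_x=\sum_{n\ge0}I_n(g_n(x,\cdot))$ with $g_n\colon\BX^{n+1}\to\R$ symmetric in its last $n$ coordinates, and set $a_n:=\int g_n^2\,d\lambda^{n+1}$. Using the isometry \eqref{eqn:Isometry}, the representation of $D$ on Wiener--It\^o integrals recalled in Section~\ref{secpre}, and Tonelli's theorem, one computes
\begin{align*}
  \BE\int G_x^2\,\lambda(dx)&=\sum_{n\ge0}n!\,a_n,\qquad
  \BE\int(D_xG_y)^2\,\lambda^2(d(x,y))=\sum_{n\ge1}n\cdot n!\,a_n,\\
  \BE\int(D^2_{z,x}G_y)^2\,\lambda^3(d(x,y,z))&=\sum_{n\ge2}n^2(n-1)\,(n-1)!\,a_n .
\end{align*}
Hence \eqref{eH2}, \eqref{eH3} and \eqref{eH4} together imply $\sum_{n\ge0}(n+1)^2\,n!\,a_n<\infty$. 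Since symmetrisation does not increase the $L^2$-norm, $\int\tilde g_n^2\,d\lambda^{n+1}\le a_n$, whence
\begin{displaymath}
  \sum_{m\ge1}m\cdot m!\int\tilde g_{m-1}^2\,d\lambda^{m}\le\sum_{n\ge0}(n+1)^2\,n!\,a_n<\infty,
\end{displaymath}
which is precisely the condition for $\delta(G)=\sum_{m\ge1}I_m(\tilde g_{m-1})$ to belong to $\dom D$. The same estimates, restricted to a fixed $x$ via Fubini, show that for $\lambda$-a.e.\ $x$ the integrand $y\mapsto D_xG_y$ satisfies \eqref{eH2} and \eqref{eH3} (the latter because $D_zD_xG_y=D^2_{z,x}G_y$ and \eqref{eH4} holds), so $D_xG\in\dom\delta$ and $\delta(D_xG)\in L^2(\BP_\eta)$ for $\lambda$-a.e.\ $x$.

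It then remains to identify the chaos expansions of both sides of \eqref{eq:delta1}. Since $\delta(G)\in\dom D$, the representation of $D$ gives $D_x\delta(G)=\sum_{n\ge0}(n+1)I_n(\tilde g_n(x,\cdot))$ for $\lambda$-a.e.\ $x$. On the other side, $D_xG_y=\sum_{n\ge1}n\,I_{n-1}(g_n(y,x,\cdot))$, so the integrand $D_xG$ has order-$(n-1)$ kernel $(y,w_1,\dots,w_{n-1})\mapsto n\,g_n(y,x,w_1,\dots,w_{n-1})$ and, by \eqref{eKSI},
\begin{displaymath}
  \delta(D_xG)=\sum_{n\ge1}I_n\big(n\cdot\widehat g_n(x,\cdot)\big),
\end{displaymath}
where $\widehat g_n(x,\cdot)$ denotes the symmetrisation of $(x_1,\dots,x_n)\mapsto g_n(x_1,x,x_2,\dots,x_n)$ over $x_1,\dots,x_n$. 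Adding $G_x=\sum_{n\ge0}I_n(g_n(x,\cdot))$ and comparing kernels chaos by chaos (all three objects lie in $L^2(\BP_\eta)$ for $\lambda$-a.e.\ $x$, with symmetric kernels), \eqref{eq:delta1} reduces to the pointwise relation
\begin{equation*}
  (n+1)\,\tilde g_n(x,x_1,\dots,x_n)=g_n(x,x_1,\dots,x_n)+n\cdot\widehat g_n(x,x_1,\dots,x_n),\qquad n\ge1,
\end{equation*}
together with the trivial case $n=0$.

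To see this last relation, note that since $g_n$ is symmetric in its last $n$ arguments one has $\tilde g_n(y_0,\dots,y_n)=\tfrac1{n+1}\sum_{i=0}^{n}g_n\big(y_i,\{y_j:j\ne i\}\big)$, where $g_n\big(y_i,\{y_j:j\ne i\}\big)$ means $g_n$ with first argument $y_i$ and the remaining arguments the multiset $\{y_j:j\ne i\}$ in any order. Setting $y_0=x$ and $y_i=x_i$, splitting off the $i=0$ term (which equals $g_n(x,x_1,\dots,x_n)$) and observing that the remaining $n$ summands add up to $n$ times the symmetrisation over $(x_1,\dots,x_n)$ of $g_n(x_1,x,x_2,\dots,x_n)$, i.e.\ to $n\,\widehat g_n(x,x_1,\dots,x_n)$, gives the claimed identity.

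Putting everything together, both sides of \eqref{eq:delta1} lie in $L^2(\BP_\eta)$ and have identical chaos expansions for $\lambda$-a.e.\ $x$, hence coincide almost surely. The main obstacle is the first step: verifying that \eqref{eH4} is exactly the integrability needed to upgrade square-summability of the kernels of $\delta(G)$ with weights $(n+1)!$ to square-summability with the heavier weights $(n+1)\cdot(n+1)!$ that characterise $\dom D$. Once this is settled, the combinatorial identity above is routine, provided one keeps careful track of which variables are being symmetrised over.
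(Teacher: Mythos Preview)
Your proof is correct. The domain-membership part coincides with the paper's argument (both compute the series $\sum_n n(n-1)\,n!\,a_n$ from \eqref{eH4} and combine it with the lower-order series from \eqref{eH2}--\eqref{eH3} to conclude $\delta(G)\in\dom D$, then use \eqref{eH3}--\eqref{eH4} pointwise in $x$ to get $D_xG\in\dom\delta$).

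The proofs diverge in establishing the commutation identity itself. You match the chaos expansions of $D_x\delta(G)$ and $G_x+\delta(D_xG)$ directly, reducing everything to the combinatorial fact
\[
  (n+1)\,\tilde g_n(x,x_1,\dots,x_n)=g_n(x,x_1,\dots,x_n)+n\,\widehat g_n(x,x_1,\dots,x_n),
\]
which you verify by splitting the full symmetrisation according to which argument occupies the first slot. The paper instead argues by duality: it tests both sides of \eqref{eq:delta1} against an arbitrary $G'$ satisfying \eqref{eH2}--\eqref{eH3}, using the covariance formula \eqref{e4.34} and two applications of integration by parts to obtain
\[
  \BE\int G'_x D_x\delta(G)\,\lambda(dx)
  =\BE\int G'_x\big(G_x+\delta(D_xG)\big)\,\lambda(dx),
\]
and then invokes density of such $G'$ in $L^2(\BP_\eta\otimes\lambda)$. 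Your route is more elementary and self-contained---it needs no density argument and makes the identity visible at the level of kernels---at the price of the symmetrisation bookkeeping. The paper's route trades that bookkeeping for the machinery of \eqref{e4.34} and Lemma~\ref{l3.1}, which are already in use elsewhere, and arguably scales better if one wanted analogous rules for higher-order operators.
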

\begin{proof}
  We have already noticed at 
\eqref{eq:4} that \eqref{eH2} and \eqref{eH3} imply $G\in\dom\deltaKS$.
Next we show that  $\deltaKS(G)\in\dom D$. 
Assumptions \eqref{eH2} and \eqref{eH3}
ensure that $G_x\in\dom D$ for $\lambda$-a.e.\ $x$.
Representing $G$ as in 
\eqref{2.4} and using \cite[Theorem~3]{Last16} twice, we can write 
\begin{align*}
  D^2_{y,z}G_x=\sum^\infty_{n=0}(n+2)(n+1) I_{n}(g_{n+2}(x,y,z,\cdot)), \quad 
\lambda^2\text{-a.e.\ $(y,z)\in\BX^2$}.
\end{align*}
By the $L^2$-convergence of the right-hand side and
\eqref{eqn:Isometry}, we obtain
\begin{align*}
  \BE &\int (D^2_{y,z} G_x)^2\,\lambda^3(d(x,y,z))\\
      &=\sum^\infty_{n=0}(n+2)^2(n+1)^2n!
        \iint g_{n+2}(x,y,z,\mathbf{x})^2\,\lambda^n(d \mathbf{x})
        \,\lambda^3(d(x,y,z))\\
      &=\sum^\infty_{n=0}(n+2)(n+1)(n+2)!
        \int g_{n+2}(\mathbf{x})^2\,\lambda^{n+3}(d \mathbf{x}).
\end{align*}
By assumption \eqref{eH4} this is finite, which is equivalent to 
\begin{align*}
\sum^\infty_{n=2}n (n-1) n! \int g_{n}(\mathbf{x})^2\,\lambda^{n+1}(d \mathbf{x})<\infty.
\end{align*}
In view of \eqref{eKSI} and the inequalities
\begin{align*}
\int \tilde{g}_{n}(\mathbf{x})^2\,\lambda^{n+1}(d \mathbf{x})
\le \int g_{n}(\mathbf{x})^2\,\lambda^{n+1}(d \mathbf{x})
\end{align*}
(a consequence of Jensen's inequality), this yields that $\deltaKS(G)\in\dom D$.

Let $G'$ be another measurable function satisfying \eqref{eH2} and
\eqref{eH3}. It follows from \eqref{eq:4} and the polarisation identity that
\begin{align} \label{e4.34}
\BE\deltaKS(G)\deltaKS(G')
=\BE\int G_xG_x'\,\lambda(dx)+\BE\int D_xG_y D_yG'_x \,\lambda^2(d(x,y)). 
\end{align}
The integration by parts formula \eqref{e2.2} yields that
\begin{align*}
\BE \deltaKS(G)\deltaKS(G')=\BE \int G'_x D_x\deltaKS(G)\,\lambda(dx).
\end{align*}
Assumptions \eqref{eH3} and \eqref{eH4} show that $D_xG\in\dom\deltaKS$
for $\lambda$-almost all $x$ and that $\deltaKS(D_\cdot G)$
  belongs to $L^2(\BP\otimes\lambda)$ (see \eqref{eq:4} and the
  discussion before it).  Therefore, we obtain from Fubini's
  theorem and integration by parts that
\begin{align*}
\BE\iint D_xG_y D_yG'_x \,\lambda(dy)\,\lambda(dx)
=\BE \int G'_x \deltaKS(D_xG)\,\lambda(dx),
\end{align*}
where we could apply Fubini's theorem on the left-hand side due
to \eqref{eH3} and on the right-hand side by the Cauchy--Schwarz
  inequality and the square integrability of $G'$ and
$\deltaKS(D_\cdot G)$.  Inserting these two results into
\eqref{e4.34} yields
\begin{align*}
\BE \int G'_x D_x\deltaKS(G)\,\lambda(dx)
=\BE\int G'_x G_x \,\lambda(dx)
+\BE \int G'_x \deltaKS(D_xG)\,\lambda(dx).
\end{align*}
Since the class of functions $G'$ with the required
properties is dense in $L^2(\BP_\eta\otimes\lambda)$
(see e.g.\ the proof of \cite[Theorem~5]{Last16}),
we conclude the asserted formula \eqref{eq:delta1}.
\end{proof}

\section{Proof for the Wasserstein distance in Theorem \ref{tmain}}\label{sec:Wasserstein}

Our proof is similar to the proofs of Theorems 1.1 and 1.2 in
\cite{las:pec:sch16} and relies on the ideas already present in
\cite{PSTU10}.  The first step is to recall Stein's method.  Let
$\mathbf{C}_{1,2}$ be the set of all twice continuously differentiable
functions $g\colon\R\rightarrow \R$ whose first derivative is bounded
in absolute value by $1$ and the second derivative by $2$. 
Then we have for an integrable random variable $X$ that
\begin{displaymath}
  d_W(X,N)\le\sup_{g\in \mathbf{C}_{1,2}}|\BE[g'(X)-Xg(X)]|. 
\end{displaymath}

Let the function $G$ satisfy the assumptions of Theorem~\ref{tmain}
and write $X:=\deltaKS(G)$. By the definition of the KS-integral we can
write $X\equiv X(\eta)$ as a measurable function of $\eta$.  Let
$g\in \mathbf{C}_{1,2}$. Then we have for $\lambda$-a.e.\ $x\in\BX$
and a.s.\ that
\begin{align}\label{e17.6}
  D_xg(X)=g(X(\eta+\Dirac_x))-g(X(\eta))=g(X+D_xX)-g(X).
\end{align}
Since $g$ is Lipschitz (by the boundedness of its first derivative)
and $X\in\dom D$ by Lemma~\ref{l3.3}, it follows that
$|D_xg(X)|\le |D_xX|$, so that $Dg(X)$ (considered as a function
on $\bN\times\BX$) is square integrable with respect to
$\BP_\eta\otimes\lambda$.  Since, moreover, it is clear that $g(X)$ is
square integrable, we have in particular that $g(X)\in\dom D$.
The integration by parts formula \eqref{e2.2} yields
that
\begin{equation}\label{eqn:partial_integration_Xg}
  \BE Xg(X)=\BE\int G_xD_xg(X)\,\lambda(dx).
\end{equation}
Since $G\in L^2(\BP_\eta\otimes\lambda)$
and $X\in\dom D$,
we obtain from the Lipschitz continuity of $g$ and the Cauchy--Schwarz inequality that
\begin{align}\label{e3.78}
  \BE\int |G_xD_xg(X)|\,\lambda(dx) \leq \BE\int |G_x| |D_xX|\,\lambda(dx) <\infty.
\end{align}
We have that
\begin{align*}
  D_xg(X) & = g(X+D_xX)-g(X) = \int_X^{X+D_xX} g'(t) \, dt
  = D_xX \int_0^1 g'(X+sD_xX) \, ds. 
\end{align*}
Our assumptions on $G$ allow to apply the commutation rule
\eqref{eq:delta1} to $D_xX$, yielding a.s.\ and for $\lambda$-a.e.\ $x$ that
\begin{align*}
  G_xD_xg(X)
  &=G_x D_xX \int_0^1 g'(X+sD_xX) \, ds\\
  & =\int_0^1 G_x (G_x + \deltaKS(D_xG))  g'(X+sD_xX) \, ds  \\
  & = \int_0^1 G_x^2  g'(X+sD_xX) \, ds + \int_0^1 G_x \deltaKS(D_xG)  g'(X+sD_xX) \, ds  \\
  & =:S_1(x)+S_2(x).
\end{align*}
In view of $|g'|\le 1$, \eqref{eq:delta1}, \eqref{eH2} and \eqref{e3.78}, we can note that 
\begin{align}\label{eabc}
& \BE \int\int^1_0 |G_x \deltaKS(D_xG)  g'(X+sD_xX)| \,ds\,\lambda(dx) 
 \leq \BE \int |G_x| (|D_xX| + |G_x|) \,\lambda(dx) <\infty.
\end{align}
We obtain
\begin{align*}
  |\BE[g'(X)-Xg(X)]|
  & \leq \bigg| \BE g'(X) \bigg( 1 - \int G_x^2 \, \lambda(dx) - \int D_xG_y D_y G_x \, 
    \lambda^2(d(x,y)) \bigg) \bigg| \\
  & \quad + \bigg| \BE  \int \big(g'(X) G_x^2-S_1(x)\big) \,\lambda(dx)  \bigg| \\
  & \quad + \bigg| \BE g'(X) \int D_xG_y D_y G_x \, \lambda^2(d(x,y)) 
    - \BE\int S_2(x)\,\lambda(dx) \bigg| \\
  & =: U_0 + U_1 + U_2.
\end{align*}
Since $\BE\deltaKS(G)^2=1$, Jensen's inequality and \eqref{eq:4} yield that 
\begin{align*}
  U_0 & \leq \BE\Big| 1- \int G_x^2 \, \lambda(dx)
        - \int D_xG_y D_y G_x \, \lambda^2(d(x,y)) \Big| \\
      & \leq \left(\BV \int G_x^2 \, \lambda(dx) \right)^{1/2}
        +  \left( \BV \int D_xG_y D_y G_x \, \lambda^2(d(x,y)) \right)^{1/2}.
\end{align*}
It follows from the Poincar\'e inequality (see
\cite[Section~18.3]{LastPenrose17}) that
\begin{displaymath}
  \BV \int G_x^2 \, \lambda(dx)
  \leq \BE \int \bigg(\int D_y(G_x^2)
  \, \lambda(dx)\bigg)^2 \, \lambda(dy) = T_1^2
\end{displaymath}
and
\begin{multline*}
  \BV \int D_xG_y D_y G_x \, \lambda^2(d(x,y))
  \leq \BE \int \bigg( \int D_z\big( D_xG_y D_y G_x \big)
     \, \lambda^2(d(x,y))\bigg)^2 \, \lambda(dz) = T_2^2,
\end{multline*}
whence
\begin{equation}\label{eq:BoundU0}
  U_0 \leq T_1+T_2.
\end{equation}

We now turn to $U_1$. We note first that, by $|g'|\le 1$ and \eqref{eH2},
\begin{align*}
\BE \int\int^1_0 G_x^2\big|g'(X)-g'(X+sD_xX)\big|\, ds \, \lambda(dx)<\infty.
\end{align*}
Because of
\begin{equation}\label{eqn:diff_g'}
g'(X+sD_xX)-g'(X) = s D_xX \int_0^1 g''(X+stD_xX) \, dt =: D_xX H(s,x)
\end{equation}
for $x\in\BX$ and $s\in[0,1]$, we have that
\begin{align}\notag
U_1& = \bigg|\BE\int\int_0^1 G_x^2(g'(X+sD_xX) - g'(X)) \, ds \, \lambda(dx)\bigg| \\ \notag
& = \bigg|\BE\int\int_0^1 G_x^2 D_xX H(s,x) \, ds \, \lambda(dx)\bigg|\\ 
& \le \bigg|\BE\int\int_0^1 G_x^2 G_x H(s,x) \, ds\, \lambda(dx)\bigg|
  \notag\\
& \quad   +\bigg|\BE \int\int_0^1 G_x^2 \deltaKS(D_xG)
    H(s,x) \, ds \, \lambda(dx)\bigg|,
    \label{e4.74}
\end{align}
where we have used the commutation rule \eqref{eq:delta1}
in the last step. To justify the linearity of the integration
we can assume without loss of generality that
\begin{align*}
T_3=\BE \int |G_x|^3\, \lambda(dx)<\infty
\end{align*}
and use that $|g''|\le 2$. The latter inequality yields that
$|H(s,x)|\leq 2s$ and
$$
\bigg| \BE \int\int_0^1 G_x^2 G_x H(s,x) \, ds \, \lambda(dx) \bigg| 
\leq \BE \int\int_0^1 |G_x|^3 |H(s,x)| \, ds \, \lambda(dx) 
\le T_3.
$$
To treat the term \eqref{e4.74} we first use
$|\deltaKS(D_xG)|\leq |D_xX|+|G_x|$ for $x\in\BX$ (see
\eqref{eq:delta1}), \eqref{eqn:diff_g'} and the preceding
integrability properties to conclude that
\begin{equation}\label{eqn:cond_lem}
\begin{split}
  & \BE\int\int^1_0 G_x^2 |\deltaKS(D_xG) H(s,x)|\, ds \, \lambda(dx) \\
  & \leq \BE\int\int^1_0 |G_x|^3 |H(s,x)|\, ds \, \lambda(dx) + \BE\int\int^1_0 G_x^2 |D_xX H(s,x)|\, ds \, \lambda(dx) \\
  & = \BE\int\int^1_0 |G_x|^3 |H(s,x)|\, ds \, \lambda(dx) +
  \BE\int\int^1_0 G_x^2 |g'(X)-g'(X+sD_xX)|\, ds \, \lambda(dx)
  <\infty.
\end{split}
\end{equation}
Therefore, we obtain
from Fubini's theorem that
\begin{align*}
  U_1\le T_3+ \int\int^1_0 |\BE G_x^2 \deltaKS(D_xG) H(s,x)|\, ds \, \lambda(dx).
\end{align*}
The expectation on the above right-hand side can be bounded
with Lemma~\ref{l3.2} applied to $H:=G_x^2H(s,x)$ and with $D_xG$ instead of $G$
(justified by \eqref{eH3}, \eqref{eH4}, and \eqref{eqn:cond_lem}).
This gives
\begin{align*}
  U_1&\le T_3+ 
  \int\int^1_0 \BE |D_xG_y| \big|D_y\big(G_x^2 H(s,x)\big)\big|\, ds\, \lambda^2(d(x,y)) \\
  & \leq T_3+\BE \int |D_xG_y| (|D_y(G_x^2)| + 2G_x^2)\,\lambda^2(d(x,y)),
\end{align*}
where we used \eqref{eq:1}, $|D_yH(s,x)+H(s,x)|\leq 2s$, and $|D_yH(s,x)|\leq 4s$.

Now we turn to the term $U_2$. Define $R_x:=\int^1_0g'(X+sD_xX)\,ds$, $x\in\BX$.
By the integrability property \eqref{eabc} and Fubini's theorem,
\begin{align*}
  \BE \int S_2(x)\,\lambda(dx) 
  =\int \BE \deltaKS(D_xG)G_xR_x \,\lambda(dx).
\end{align*}
By Lemma~\ref{l3.1}, whose assumptions are satisfied for
  $\lambda$-a.e.\ $x$ by \eqref{eH2}--\eqref{eH4} and $|g'|\le 1$,
and the product rule \eqref{eq:1},
\begin{align*}
  \BE& \int S_2(x)\,\lambda(dx) 
       =\int \int \BE D_xG_yD_y(G_xR_x) \,\lambda(dy) \, \lambda(dx)\\
     &=\int\int (\BE D_xG_yD_yG_xR_x+\BE D_xG_y(G_x+D_yG_x)D_yR_x) \,\lambda(dy) \, \lambda(dx),
\end{align*}
so that
\begin{align*}
  U_2 & \le  \int\bigg|\BE D_yG_x D_xG_y 
        \int_0^1 \big(g'(X+sD_xX) - g'(X)\big)\,ds \bigg|\, \lambda^2(d(x,y)) \\
      & \qquad + \int\bigg|\BE D_xG_y(G_x + D_yG_x) D_y\bigg(\int_0^1 g'(X+sD_xX) 
        \, ds\bigg)\bigg|\,\lambda^2(d(x,y)) \\
      & =: U_{2,1} + U_{2,2}.
\end{align*}
Here, the expectations exist for $\lambda^2$-a.e.\ $(x,y)$ because of $|g'|\leq 1$, \eqref{eH2} and \eqref{eH3}. In view of the definition of $T_4$ we can assume 
without loss of generality that
\begin{align}\label{e823}
\BE\int |D_yG_x D_xG_y G_x|\,\lambda^2(d(x,y))<\infty.
\end{align}
The commutation rule \eqref{eq:delta1} leads to
\begin{align*}
U_{2,1} & = \int\bigg|\BE D_yG_x D_xG_y D_xX 
\int_0^1\int_0^1 sg''(X+stD_xX) \, ds \, dt \bigg|\,\lambda^2(d(x,y)) \\
& \le  \int \bigg| \BE D_yG_x D_xG_y G_x 
\int_0^1\int_0^1 sg''(X+stD_xX)\,ds\,dt \bigg|\,\lambda^2(d(x,y)) \\
& \quad + \int \bigg|\BE D_yG_x D_xG_y\deltaKS(D_xG)\int_0^1\int_0^1 sg''(X+stD_xX) 
\,ds\,dt\bigg|\,\lambda^2(d(x,y)).
\end{align*}
The following computation as well as \eqref{eH3} and \eqref{eH4}
  allow us to apply Lemma \ref{l3.2} to the second term on the
  right-hand side. From the commutation rule \eqref{eq:delta1}, the
  boundedness of $g'$ and $g''$, \eqref{e823} and \eqref{eH3} we
  obtain
\begin{align*}
& \int \BE \bigg|D_yG_x D_xG_y\deltaKS(D_xG)\int_0^1\int_0^1 sg''(X+stD_xX) 
\,ds\,dt\bigg| \,\lambda^2(d(x,y)) \\
& \leq \int \BE |D_yG_x D_xG_y G_x| \,\lambda^2(d(x,y)) \\
& \quad + \int \BE \bigg|D_yG_x D_xG_y D_xX \int_0^1\int_0^1 sg''(X+stD_xX) 
\,ds\,dt\bigg| \,\lambda^2(d(x,y)) \\
& \leq \int \BE |D_yG_x D_xG_y G_x| \,\lambda^2(d(x,y)) \\
& \quad + \int \BE \bigg|D_yG_x D_xG_y \int_0^1 (g'(X+sD_xX) - g'(X))
\,ds\bigg| \,\lambda^2(d(x,y)) < \infty.
\end{align*}
Thus, we derive from Lemma \ref{l3.2} and $|g''|\le 2$ that
\begin{align*}
  U_{2,1}
  & \leq \BE \int |D_yG_x D_xG_y G_x| \, \lambda^2(d(x,y)) \\
  & \quad +  \int \int \BE \bigg|D_xG_z D_z
    \bigg(D_yG_x D_xG_y \int_0^1\int_0^1 sg''(X+stD_xX) \, ds \, dt\bigg) \bigg| 
    \, \lambda(dz)\,\lambda^2(d(x,y))  \allowdisplaybreaks  \\
  & \leq \BE \int |D_yG_x D_xG_y G_x| \, \lambda^2(d(x,y)) \\
  & \quad +  \BE \int |D_xG_z| \big(|D_z\big(D_yG_x D_xG_y\big)| 
    + 2|D_yG_x D_xG_y| \big) \, \lambda^3(d(x,y,z)),
\end{align*}
where we used \eqref{eq:1} in the last step. Similarly as in \eqref{e17.6}, we derive 
\begin{align}
  \notag
  D_y\bigg(\int_0^1
  &g'(X+sD_xX) \, ds\bigg) \\
  \notag
  & = \int_0^1 (g'(X+sD_xX+D_yX+sD^2_{x,y}X) - g'(X+sD_xX)) \, ds \\
  \notag
  & = \int_0^1 \int_0^1 (D_yX+sD^2_{x,y}X) g''(X+sD_xX+ t (D_yX+sD^2_{x,y}X)) \, dt \, ds\\
  \label{e9876}
  & = \int_0^1 (D_yX+sD^2_{x,y}X) R(s,x,y) \, ds
\end{align}
for $x,y\in\BX$ with
$$
R(s,x,y):=\int_0^1 g''(X+sD_xX+ t (D_yX+sD^2_{x,y}X)) \, dt.
$$
By assumptions \eqref{eH2}-\eqref{eH5} we can use
the commutation rule \eqref{eq:delta1} twice to obtain 
that
\begin{align*}
  D^2_{x,y}X=D_y(D_x\deltaKS(G))=D_y(G_x+\deltaKS(D_xG))=D_yG_x+D_xG_y+\deltaKS(D^2_{x,y}G)
\end{align*}
a.s.\ and for $\lambda^2$-a.e.\ $(x,y)$, while $D_yX=G_y+\deltaKS(D_yG)$
a.s.\ and for $\lambda$-a.e.\ $y$.  Therefore, \eqref{e9876} equals
\begin{align*}
  \int_0^1 (G_y+\deltaKS(D_yG)+s (D_xG_y+D_yG_x + \deltaKS(D^2_{x,y}G)))R(s,x,y)\,ds.
\end{align*}
For $s\in[0,1]$ one has
\begin{equation}\label{eqn:e9877}
  \begin{split}
    \big|(G_y&+\deltaKS(D_yG)+s (D_xG_y+D_yG_x + \deltaKS(D^2_{x,y}G)))R(s,x,y)\big| \\
    & = \big| (D_yX+s D^2_{x,y}X) R(s,x,y) \big| \\
    & = \bigg| \int_0^1 (D_yX+s D^2_{x,y}X) g''(X+sD_xX+ t (D_yX+sD^2_{x,y}X)) \, dt \bigg| \\
    & = \big| g'(X+sD_xX+ D_yX+sD^2_{x,y}X) - g'(X+sD_xX) \big| \leq 2,
  \end{split}
\end{equation}
whence
$$
\bigg| \int_0^1 (\deltaKS(D_yG)+s \deltaKS(D^2_{x,y}G)) R(s,x,y)\,ds \bigg| \leq 2 + \bigg| \int_0^1 (G_y+s (D_xG_y+D_yG_x))R(s,x,y)\,ds \bigg|.
$$
Since $|R(s,x,y)|\leq 2$, 
$$
\bigg| \int_0^1 (G_y+s (D_xG_y+D_yG_x))R(s,x,y)\,ds \bigg| \leq 2 |G_y| + |D_xG_y+D_yG_x|.
$$
Because of the assumption $T_4<\infty$, this yields
\begin{multline*}
  \int \bigg| \BE D_xG_y (G_x+D_yG_x) \int_0^1 (G_y+s (D_xG_y+D_yG_x))R(s,x,y)\,ds  \bigg| \, \lambda^2(d(x,y)) \\
  \leq \int \BE  |D_xG_y (G_x+D_yG_x)|  (2 |G_y| + |D_xG_y+D_yG_x|) \, \lambda^2(d(x,y))<\infty.
\end{multline*}
Together with \eqref{eH2} and \eqref{eH3}, we deduce from \eqref{eqn:e9877} that
\begin{multline}\label{eqn:cond_lem3.2}
  \BE \int_0^1 \big| D_xG_y (G_x+D_yG_x) (\deltaKS(D_yG)+s \deltaKS(D^2_{x,y}G)) R(s,x,y) \big| \,ds \\
  \leq \BE |D_xG_y (G_x+D_yG_x)|  (2 + 2 |G_y| + |D_xG_y+D_yG_x|) <\infty
\end{multline}
for $\lambda^2$-a.e.\ $(x,y)$. Hence, we have shown that
\begin{align*}\notag
U_{2,2} & \leq \BE \int \big|(G_x + D_yG_x) D_xG_y\big| (2|G_y|+|D_xG_y+D_yG_x|) \, \lambda^2(d(x,y)) \\
& \quad +   \int_0^1 \int \big|\BE (G_x + D_yG_x) D_xG_y\deltaKS(D_yG+sD^2_{x,y}G)   
 R(s,x,y) \big| \, \lambda^2(d(x,y))\, ds.
\end{align*}
By Lemma~\ref{l3.2}, which can be applied due to
\eqref{eqn:cond_lem3.2}, the second term on the right-hand side can be
further bounded by
\begin{align*}
&  \int_0^1 \int \big|\BE(D_yG_z+sD^2_{x,y}G_z) D_z((G_x + D_yG_x) D_xG_y R(s,x,y))
 \big|\,\lambda^3(d(x,y,z))\, ds\\
& \quad \leq 2\,\BE \int (|D_yG_z|+|D^2_{x,y}G_z|) \big(|D_z\big((G_x + D_yG_x) D_xG_y\big)| + 
2 |(G_x + D_yG_x) D_xG_y| \big) \\
& \qquad \qquad \qquad \times \lambda^3(d(x,y,z)).
\end{align*}

Combining the previous bounds, we see that
\begin{align*}
U_1+U_2 \leq & \BE \int |G_x|^3\, \lambda(dx) + \BE \int \big( 2 |D_xG_y D_yG_x G_x| 
+ |D_xG_y (D_yG_x)^2| + 2G_x^2 |D_xG_y| \\
& \qquad \qquad \qquad \qquad \qquad + \big| (G_x + D_yG_x)  D_xG_y\big| (2|G_y|+|D_xG_y+D_yG_x|) \\
& \qquad \qquad \qquad \qquad \qquad + |D_yG_x D_xG_y G_x| \big)\, \lambda^2(d(x,y)) \\
& + \BE \int 2(|D_yG_z|+|D^2_{x,y}G_z|) \big(|D_z\big((G_x + D_yG_x) D_xG_y\big)| + 2|(G_x + D_yG_x) D_xG_y| \big) \\
& \qquad \quad +   |D_xG_z| \big(|D_z\big(D_yG_x D_xG_y\big)| + 2|D_yG_x D_xG_y| \big)\, \lambda^3(d(x,y,z)) \\
= & T_3 + T_4 + T_5,
\end{align*}
which together with \eqref{eq:BoundU0} completes the proof.

\section{Proof for the Kolmogorov distance in
  Theorem~\ref{tmain}} \label{sec:Kolmogorov}

We prepare the proof of the second part of Theorem \ref{tmain} by two lemmas. Since we consider iterated KS-integrals in the following, we
indicate the integration variable as a subscript, i.e., write
$\deltaKS_x$ to denote the KS-integral with respect to $x$.

\begin{lemma}\label{lem:IteratedSkorohodIntegral}
  Let $h:\mathbf{N}\times\BX^2\to\R$ be measurable and such that
  \begin{multline}\label{eqn:integrability_h}
    \BE \int h(x,y)^2 \, \lambda^2(d(x,y))
      + \BE \int (D_zh(x,y))^2 \, \lambda^3(d(x,y,z)) \\
    + \BE\int (D^2_{z,w}h(x,y))^2 \, \lambda^4(d(x,y,z,w)) <\infty.
  \end{multline}
  \begin{itemize}
  \item [{\rm (i)}] Then, $\deltaKS_x(\deltaKS_y(h(x,y)))$ is well defined and
    \begin{multline*}
      \BE\big[ \deltaKS_x(\deltaKS_y(h(x,y)))^2 \big] 
      \leq 3 \BE \int h(x,y)^2 \, \lambda^2(d(x,y))
      + 3 \BE \int \big(D_zh(x,y)\big)^2 \, \lambda^3(d(x,y,z)) \\
      + 2\BE\int \big(D^2_{w,z}h(x,y)\big)^2 \, \lambda^4(d(x,y,z,w)).
    \end{multline*}
  \item [{\rm (ii)}] If $H\in L^2(\BP_\eta)$ is such that
    $D_xH\in L^2(\BP_\eta)$ for $\lambda$-a.e.\ $x$,
    $D^2_{x,y}H\in L^2(\BP_\eta)$ for $\lambda^2$-a.e.\ $(x,y)$ and
    \begin{equation}\label{eqn:Integrability_IteratedSkorohodIntegral}
      \BE \int |D^2_{x,y}H h(x,y)| \, \lambda^2(d(x,y))<\infty,
    \end{equation}
    then
    $$
    \BE \int D^2_{x,y}H h(x,y) \, \lambda^2(d(x,y)) = \BE\big[ \deltaKS_x(\deltaKS_y(h(x,y))) H \big].
    $$
  \end{itemize}
\end{lemma}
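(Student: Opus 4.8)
The plan is to prove both parts by reducing the iterated Skorohod integral to the ordinary theory of $\delta$ already developed, applied first in the variable $y$ and then in the variable $x$. For part~(i), fix $x$ and consider $y\mapsto h(x,y)$. Condition \eqref{eqn:integrability_h} gives, via Fubini, that for $\lambda$-a.e.\ $x$ the function $h(x,\cdot)$ satisfies the analogues of \eqref{eH2} and \eqref{eH3} (with the roles of the state space variables as needed), so that $\delta_y(h(x,y))$ is well defined and, by \eqref{eq:4},
\begin{align*}
  \BE\,\delta_y(h(x,y))^2
  = \BE\int h(x,y)^2\,\lambda(dy)
  + \BE\int D_zh(x,y)\,D_yh(x,z)\,\lambda^2(d(y,z)).
\end{align*}
Bounding the mixed term by Cauchy--Schwarz and integrating in $x$ shows that $x\mapsto\delta_y(h(x,y))$ lies in $L^2(\BP_\eta\otimes\lambda)$. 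To iterate, I would next check that this new integrand satisfies \eqref{eH3} in the variable $x$, i.e.\ that $\BE\int(D_w\delta_y(h(x,y)))^2\,\lambda^2(d(x,w))<\infty$. Here the commutation rule \eqref{eq:delta1} gives $D_w\delta_y(h(x,y))=h(x,w)+\delta_y(D_wh(x,y))$; squaring, using $(a+b)^2\le 2a^2+2b^2$, and applying \eqref{eq:4} once more to $\delta_y(D_wh(x,y))$ bounds this by a constant times the three terms in \eqref{eqn:integrability_h}. Hence $x\mapsto\delta_y(h(x,y))$ is in $\dom\delta_x$ and $\delta_x(\delta_y(h(x,y)))$ is well defined.

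For the variance bound in~(i), I would expand $\BE[\delta_x(\delta_y(h(x,y)))^2]$ by \eqref{eq:4} applied in $x$:
\begin{align*}
  \BE[\delta_x(\delta_y(h(x,y)))^2]
  = \BE\int \delta_y(h(x,y))^2\,\lambda(dx)
  + \BE\int D_x\delta_y(h(x,z))\,D_z\delta_y(h(z,y))\big|_{\ldots}\,\lambda^2(d(x,z)),
\end{align*}
then substitute $\BE\delta_y(h(x,y))^2$ from the formula above and, in the mixed term, substitute $D_w\delta_y(h(x,y))=h(x,w)+\delta_y(D_wh(x,y))$ and expand. Every resulting term is controlled by Cauchy--Schwarz and a further application of \eqref{eq:4}/\eqref{eqn:Isometry}-type isometries by one of the three integrals $\BE\int h^2$, $\BE\int(D_zh)^2$, $\BE\int(D^2_{w,z}h)^2$; tracking the combinatorial constants should yield exactly the coefficients $3,3,2$. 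This bookkeeping of cross terms and constants is the main obstacle: one must be careful that the symmetrisations implicit in the definition of $\delta$ do not inflate the constants, and the cleanest route may be to pass through the chaos expansions $h(x,y)=\sum_n I_n(h_n(x,y,\cdot))$ and compute $\BE[\delta_x(\delta_y(h))^2]$ directly via \eqref{eqn:Isometry}, where the factor $(n+2)!/n!$ splits and Jensen's inequality for symmetrisation gives the stated bound.

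For part~(ii), the strategy is a double integration by parts. Under \eqref{eqn:Integrability_IteratedSkorohodIntegral}, for $\lambda$-a.e.\ $x$ the inner function $y\mapsto h(x,y)$ is in $\dom\delta_y$ and $D^2_{x,y}H=D_y(D_xH)$ with $D_xH\in\dom D$ in the $y$-variable for a.e.\ $x$; applying Lemma~\ref{l3.1} to the pair $D_xH$ and $h(x,\cdot)$ (whose hypotheses hold by the assumptions on $H$, by part~(i) of this lemma, and by an argument like \eqref{efin}) gives
\begin{align*}
  \BE\int D^2_{x,y}H\,h(x,y)\,\lambda(dy)
  = \BE\big[D_xH\,\delta_y(h(x,y))\big]
\end{align*}
for $\lambda$-a.e.\ $x$. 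Integrating in $x$ and invoking \eqref{eqn:Integrability_IteratedSkorohodIntegral} together with Cauchy--Schwarz and the $L^2$-bound from part~(i) to justify Fubini, one then applies Lemma~\ref{l3.1} a second time, now in the $x$-variable to the pair $H$ and $x\mapsto\delta_y(h(x,y))$ (which is in $\dom\delta_x$ by part~(i), with $H\in\dom D$), yielding
\begin{align*}
  \BE\int D^2_{x,y}H\,h(x,y)\,\lambda^2(d(x,y))
  = \BE\int D_xH\,\delta_y(h(x,y))\,\lambda(dx)
  = \BE\big[H\,\delta_x(\delta_y(h(x,y)))\big].
\end{align*}
The delicate point here is verifying the integrability side conditions needed to invoke Lemma~\ref{l3.1} at each stage and to apply Fubini between them; these follow from \eqref{eqn:integrability_h}, \eqref{eqn:Integrability_IteratedSkorohodIntegral}, the assumed square-integrability of $H$, $D_xH$, $D^2_{x,y}H$, and the estimates established in part~(i), but they must be assembled carefully.
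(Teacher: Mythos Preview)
Your proposal is correct and follows the same two-stage strategy as the paper: iterate the Skorohod isometry with the commutation rule for part~(i), and apply Lemma~\ref{l3.1} twice for part~(ii). The one simplification you are missing in part~(i) is that the paper never works with the exact identity \eqref{eq:4} and its cross terms; instead it invokes \cite[Corollary~2.4]{las:pec:sch16}, which is \eqref{eq:4} with the off-diagonal term already bounded by Cauchy--Schwarz,
\[
\BE\,\delta(G)^2\le\BE\int G_x^2\,\lambda(dx)+\BE\int (D_zG_x)^2\,\lambda^2(d(x,z)).
\]
Applying this once in $x$, substituting $D_z\delta_y(h(x,y))=h(x,z)+\delta_y(D_zh(x,y))$ with $(a+b)^2\le 2a^2+2b^2$ exactly as you propose, and then applying the same corollary to each of the two resulting $\delta_y$-terms yields the constants $3,3,2$ in three lines, with no cross-term bookkeeping and no detour through chaos expansions. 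Your part~(ii) matches the paper's argument essentially verbatim.
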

\begin{proof}
  First, let us assume that all KS-integrals are
  well defined. By applying iteratively
  \cite[Corollary~2.4]{las:pec:sch16} and \eqref{eq:delta1}, we have
  \begin{align*}
    & \BE\big[ \deltaKS_x(\deltaKS_y(h(x,y)))^2 \big] \\
    & \leq \BE \int \deltaKS_y(h(x,y))^2 \, \lambda(dx)
      + \BE\int (D_z\deltaKS_y(h(x,y)))^2 \, \lambda^2(d(x,z)) \\
    & \leq \BE \int \deltaKS_y(h(x,y))^2 \, \lambda(dx)
      + 2\BE\int h(x,z)^2 \, \lambda^2(d(x,z))
      + 2\BE\int \deltaKS_y(D_zh(x,y))^2 \, \lambda^2(d(x,z)) \\
    & \leq \BE \int h(x,y)^2 \, \lambda^2(d(x,y))
      + \BE \int \big(D_zh(x,y)\big)^2 \, \lambda^3(d(x,y,z))
      + 2\BE\int h(x,z)^2 \, \lambda^2(d(x,z)) \\
    & \quad + 2\BE\int \big(D_zh(x,y)\big)^2 \, \lambda^3(d(x,y,z))
      + 2\BE\int \big(D^2_{w,z}h(x,y)\big)^2 \, \lambda^4(d(x,y,z,w)) \\
    & = 3 \BE \int h(x,y)^2 \, \lambda^2(d(x,y))
      + 3 \BE \int \big(D_zh(x,y)\big)^2 \, \lambda^3(d(x,y,z)) \\
    & \quad + 2\BE\int \big(D^2_{w,z}h(x,y)\big)^2 \, \lambda^4(d(x,y,z,w)).
  \end{align*}
  Since, by \eqref{eqn:integrability_h}, the right-hand side is
  finite, all involved KS-integrals are well defined by
  \cite[Proposition~2.3]{las:pec:sch16}.
	
  Because of \eqref{eqn:Integrability_IteratedSkorohodIntegral} and
  Fubini's theorem, we have
  $$
  J:=\BE \int D^2_{x,y}H h(x,y) \, \lambda^2(d(x,y)) = \int \int \BE D^2_{x,y}H h(x,y) \, \lambda(dy) \, \lambda(dx).
  $$
  For $\lambda$-a.e.\ $x$ our assumptions imply
  $D_xH\in L^2(\BP_\eta)$, $D^2_{x,y}H\in L^2(\BP_\eta)$ for
  $\lambda$-a.e.\ $y$ as well as
  $$
  \BE \int h(x,y)^2 \, \lambda(dy)<\infty \quad \text{and} \quad \BE \int (D_zh(x,y))^2 \, \lambda^2(d(y,z))<\infty.
  $$
  Thus, it follows from Lemma \ref{l3.1} that
  $$
  J = \int  \BE D_xH \deltaKS_y(h(x,y)) \, \lambda(dx).
  $$
  Since $H\in L^2(\BP_\eta)$, $D_xH\in L^2(\BP_\eta)$ for
  $\lambda$-a.e.\ $x$ and combining \eqref{eqn:integrability_h} and
  \cite[Corollary~2.4]{las:pec:sch16} as in the proof of part (i)
  yields
  $$
  \BE \int \deltaKS_y(h(x,y))^2 \, \lambda(dx)<\infty 
  \quad
  \text{and}
  \quad 	\BE \int (D_z\deltaKS_y(h(x,y)))^2 \, \lambda^2(d(x,z))<\infty,
  $$
  a further application of Lemma \ref{l3.1} leads to
  $$
  J = \BE H \deltaKS_x(\deltaKS_y(h(x,y))),
  $$
  which concludes the proof of part (ii).
\end{proof}

For $a\in\R$, let $f_a$ be a solution of the Stein equation
\begin{equation}\label{eqn:SteinEquationKolmogorov}
  f_a'(u) - u f_a(u) = \mathbf{1}\{u\leq a\} - \Phi(a), \quad u\in\R,
\end{equation}
where $\Phi$ is the distribution function of the standard normal
distribution. Note that $f_a$ is continuously differentiable on
$\R\setminus\{a\}$. Thus, we use the convention that $f_a'(a)$
is the left-sided limit of $f_a'$ in $a$. For the following lemma we
refer the reader to \cite[Lemma~2.2 and Lemma~2.3]{ChenGoldsteinShao}.

\begin{lemma}\label{lem:SteinKolmogorov}
  For each $a\in\R$ there exists a unique bounded solution $f_a$ of
  \eqref{eqn:SteinEquationKolmogorov}. This function satisfies:
  \begin{itemize}
  \item [{\rm (i)}] $u\mapsto u f_a(u)$ is non-decreasing;
  \item [{\rm (ii)}] $|uf_a(u)|\leq 1$ for all $u\in\R$;
  \item [{\rm (iii)}] $|f_a'(u)|\leq 1$ for all $u\in\R$.
  \end{itemize}
\end{lemma}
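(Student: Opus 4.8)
The plan is to construct $f_a$ explicitly and then read the three properties off the formula together with the classical Mills ratio estimates. Writing $\phi$ for the standard normal density, I would first recast \eqref{eqn:SteinEquationKolmogorov} as the linear first-order ODE
\[
  \big(e^{-u^2/2}f_a(u)\big)' = \sqrt{2\pi}\,\big(\mathbf{1}\{u\le a\}-\Phi(a)\big)\,\phi(u)
\]
and integrate it from $-\infty$ (legitimate since the right-hand side is integrable). The resulting constant of integration equals $\lim_{u\to-\infty}e^{-u^2/2}f_a(u)$, which vanishes for any bounded $f_a$; this gives at once the uniqueness of the bounded solution and its closed form
\[
  f_a(u)=\sqrt{2\pi}\,e^{u^2/2}\int_{-\infty}^{u}\big(\mathbf{1}\{t\le a\}-\Phi(a)\big)\phi(t)\,dt,
\]
that is, $f_a(u)=\sqrt{2\pi}\,(1-\Phi(a))\,e^{u^2/2}\Phi(u)$ for $u\le a$ and $f_a(u)=\sqrt{2\pi}\,\Phi(a)\,e^{u^2/2}(1-\Phi(u))$ for $u\ge a$. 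In particular $f_a>0$, $f_a$ is continuously differentiable off $a$, and $f_a'(u)=uf_a(u)+\mathbf{1}\{u\le a\}-\Phi(a)$.

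The only analytic inputs are the Mills ratio bounds $\frac{u\phi(u)}{1+u^2}\le 1-\Phi(u)\le\frac{\phi(u)}{u}$, $u>0$. The upper bound shows that $h(u):=ue^{u^2/2}(1-\Phi(u))$ stays below $1/\sqrt{2\pi}$ on $[0,\infty)$, while the lower bound makes both $h$ and $g(u):=ue^{u^2/2}\Phi(u)$ non-decreasing there. Substituting the explicit $f_a$ into $uf_a(u)$ and treating the regimes $u\le a$ and $u\ge a$, each split by the sign of $u$, one finds that $uf_a(u)$ is in every case $\pm\sqrt{2\pi}\,\Phi(a)$ or $\pm\sqrt{2\pi}\,(1-\Phi(a))$ times a value of $g$ or of $h$; monotonicity of $g,h$ and the two Mills bounds (the upper one being used to pass from a $g$-value at the endpoint $a$ to a bounded $h$-value) then yield the two-sided estimate $-(1-\Phi(a))\le uf_a(u)\le\Phi(a)$ for all $u\in\R$. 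This gives (ii) because $|uf_a(u)|\le\Phi(a)\vee(1-\Phi(a))\le 1$, and it gives (iii) because $f_a'(u)=uf_a(u)+\mathbf{1}\{u\le a\}-\Phi(a)$ then lies in $[0,1]$ for $u\le a$ and in $[-1,0]$ for $u>a$. Finally, for (i) I would compute $\frac{d}{du}\big(uf_a(u)\big)=(1+u^2)f_a(u)+u\big(\mathbf{1}\{u\le a\}-\Phi(a)\big)$; inserting the explicit $f_a$ reduces the claimed non-negativity, in every case, to precisely the lower Mills inequality $(1+u^2)(1-\Phi(u))\ge u\phi(u)$.

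The main obstacle is obtaining the sharp constant $1$ in (ii) and (iii): the crude estimate $|f_a'(u)|=|uf_a(u)+\mathbf{1}\{u\le a\}-\Phi(a)|\le|uf_a(u)|+1$ only yields $2$, so one genuinely has to exploit that $uf_a(u)$ and the inhomogeneous term have opposite signs on the relevant half-lines and to control $uf_a(u)$ through the monotone auxiliary functions $g,h$ rather than through the Stein equation alone. Alternatively, and this is the route actually taken here, one simply invokes \cite[Lemma~2.2 and Lemma~2.3]{ChenGoldsteinShao}, where exactly these facts are proved by essentially the argument sketched above.
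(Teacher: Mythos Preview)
Your proposal is correct and matches the paper's treatment: the paper does not give a proof but simply refers to \cite[Lemma~2.2 and Lemma~2.3]{ChenGoldsteinShao}, which is exactly the reference you invoke at the end. Your additional sketch of the classical Mills-ratio argument behind that citation is sound and goes beyond what the paper supplies.
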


Now we are ready for the proof for the Kolmogorov distance. It
  combines the approach for the Wasserstein distance with arguments
  from \cite{LPY20}, which refined ideas previously used in
  \cite{eich:tha16} and \cite{schulte16}. Indeed, for the normal
  approximation of Poisson functionals in Kolmogorov distance the
  Malliavin-Stein method was first used in \cite{schulte16}. One of
  the terms in the bound was removed in \cite{eich:tha16} and two more
  in \cite{LPY20}. The innovation of \cite{LPY20}, which was inspired by the proof of Theorem 2.2 in \cite{ShaoZhang2019} and which we also
  employ in the following, is to exploit the monotonicity of
  $u\mapsto u f_a(u)$ and $u\mapsto \mathbf{1}\{u\leq a\}$.

\begin{proof}[Proof for the Kolmogorov distance in Theorem~\ref{tmain}]
  Throughout the proof we can assume without loss of generality that
  $T_1,T_2,T_6,T_7,T_8,T_9<\infty$. Let $a\in\R$, and let $f_a$ be the
  solution of \eqref{eqn:SteinEquationKolmogorov} from
  Lemma~\ref{lem:SteinKolmogorov}.  For $X:=\deltaKS(G)$ we have
  $f_a(X)\in\operatorname{dom} D$ (since $|f'_a|\leq 1$ and
  $X\in \operatorname{dom} D$), whence the integration by parts rule
  \eqref{e2.2} yields similarly as in
  \eqref{eqn:partial_integration_Xg} that
\begin{displaymath}
  \BE\big[f_a'(X) - X f_a(X)\big]
  =  \BE\Big[ f_a'(X) - \int G_x D_xf_a(X) \, \lambda(dx)\Big].
\end{displaymath}
Together with
\begin{displaymath}
  D_xf_a(X) = f_a(X+D_xX) - f_a(X) = \int_0^{D_xX} f_a'(X+s) \, ds,
\end{displaymath}
we obtain
\begin{align*}
  \BE\big[f_a'(X) - X f_a(X)\big]
  & = \BE f_a'(X) \Big( 1- \int G_x D_xX \, \lambda(dx) \Big) \\
  & \qquad - \BE \int \int_0^{D_xX} \big(f_a'(X+s) - f_a'(X)\big) \, ds  \ G_x \, \lambda(dx) \\
  & =: I_1-I_2,
\end{align*}
where the decomposition into $I_1$ and $I_2$ is allowed due to $|f_a'|\leq 1$ and \eqref{e3.78}. The commutation rule \eqref{eq:delta1} yields
$$
I_1 = \BE f_a'(X) \Big( 1 - \int G_x^2 \, \lambda(dx)        - \int G_x \deltaKS(D_xG) \, \lambda(dx) \Big).
$$
From Fubini's theorem, which is applicable because of $|f_a'|\leq 1$
and \eqref{e3.78}, and Lemma~\ref{l3.1} it follows that
\begin{align*}
\BE f_a'(X) \int G_x \deltaKS(D_xG) \, \lambda(dx) & = \int \BE f_a'(X) G_x \deltaKS(D_xG)  \, \lambda(dx) \\
& = \int \int \BE D_xG_y D_y\big(f_a'(X) G_x\big) \, \lambda(dy) \, \lambda(dx).
\end{align*}
The use of Lemma~\ref{l3.1} is justified by
$f_a'(X)G_x\in L^2(\BP_\eta)$ for $\lambda$-a.e.\ $x$ and
$D_y(f_a'(X)G_x)\in L^2(\BP_\eta)$ for $\lambda^2$-a.e.\ $(x,y)$,
which are consequences of $|f_a'|\leq 1$, \eqref{eH2} and \eqref{eH3},
as well as \eqref{eH3} and \eqref{eH4}. From \eqref{eq:1} we derive
$$
D_y\big(f_a'(X) G_x\big) = f_a'(X) D_yG_x + D_yf_a'(X) (G_x+D_yG_x). 
$$
Combining this with $|f_a'|\leq 1$, \eqref{eH3} and \eqref{eH_mixed}, we see that
\begin{equation}\label{eqn:check_Fubini}
\begin{split}
& \int \int \BE \big|D_xG_y D_y\big(f_a'(X) G_x\big)\big| \, \lambda(dy) \, \lambda(dx) \\
& \leq \int \BE \big|f_a'(X) D_xG_y D_yG_x \big| \, \lambda^2(d(x,y)) + \int \BE \big| D_yf_a'(X) D_xG_y  (G_x+D_yG_x)  \big| \, \lambda^2(d(x,y)) \\
& \leq \BE \int |D_xG_y D_yG_x|  \, \lambda^2(d(x,y))  + 2 \BE \int (|D_xG_y G_x| + |D_xG_y D_yG_x|) \, \lambda^2(d(x,y)) < \infty .
\end{split}
\end{equation}
By Fubini's theorem, this makes it possible to rewrite $I_1$ as
\begin{align*}
  I_1 & = \BE f_a'(X) \big( 1 - \int G_x^2 \, \lambda(dx)
        - \int D_yG_x D_xG_y \, \lambda^2(d(x,y)) \big) \\
      & \quad - \BE \int D_yf_a'(X) (G_x+D_yG_x) D_xG_y \, \lambda^2(d(x,y)) 
= : I_{1,1} - I_{1,2}.
\end{align*}
It follows, as in the proof for the Wasserstein distance, that
$$
|I_{1,1}| \leq T_1+T_2.
$$
As shown in \eqref{eqn:check_Fubini}, we can apply Fubini's theorem to
$I_{1,2}$, so that
$$
I_{1,2} = \int \BE D_yf_a'(X) \int (G_x+D_yG_x) D_xG_y \, \lambda(dx) \, \lambda(dy).
$$
The boundedness of $f_a'$ implies that $|f_a'(X)| \leq 1$ and
$|D_yf_a'(X)|\leq 2$ for $\lambda$-a.e.\ $y$, while
$y\mapsto \int (G_x+D_yG_x) D_xG_y \, \lambda(dx)$ satisfies
\eqref{eH2} and \eqref{eH3} because of $T_6<\infty$. Thus,
Lemma~\ref{l3.1} shows that
$$
I_{1,2}  = \BE f_a'(X) \deltaKS_y\Big(
\int (G_x+D_yG_x) D_xG_y \, \lambda(dx)\Big).
$$
Together with $|f_a'|\leq 1$ and Jensen's inequality, we obtain that
\begin{align*}
  |I_{1,2}|
  & \leq \BE \big|\deltaKS_y\Big(
  \int (G_x+D_yG_x) D_xG_y \, \lambda(dx)\Big)\big| \\
  & \leq \bigg(\BE \deltaKS_y\Big( \int (G_x+D_yG_x) D_xG_y
  \, \lambda(dx)\Big)^2 \bigg)^{1/2}.
\end{align*}
It follows from \cite[Corollary~2.4]{las:pec:sch16} that
\begin{align*}
  \BE
  &\deltaKS_y\Big( \int (G_x+D_yG_x) D_xG_y \,
    \lambda(dx)\Big)^2 \\
  & \leq \BE \int \bigg( \int (G_x+D_yG_x) D_xG_y
    \, \lambda(dx)\bigg)^2 \, \lambda(dy) \\
  & \qquad + \BE \int \bigg( \int D_z \big((G_x+D_yG_x) D_xG_y\big) \,
    \lambda(dx)\bigg)^2 \, \lambda^2(d(y,z)) = T_6^2.
\end{align*}

In the sequel, we focus on $I_2$. By
\eqref{eqn:SteinEquationKolmogorov}, the inner integral in $I_2$
equals
\begin{displaymath}
  \int_0^{D_xX} \Big((X+s) f_a(X+s) - X f_a(X)
  + \mathbf{1}\{ X+s \leq a\} - \mathbf{1}\{ X \leq a\} \Big) \, ds.
\end{displaymath}
Since $u \mapsto u f_a(u)$ is non-decreasing (see
Lemma~\ref{lem:SteinKolmogorov} (i)) and
$u\mapsto \mathbf{1}\{u\leq a\}$ is non-increasing, we derive by
considering the cases $D_xX\geq 0$ and $D_xX<0$ separately that
\begin{align*}
  \bigg| \int_0^{D_xX} \Big((X+s) f_a(X+s) - X f_a(X)\Big) \, ds
  \bigg| 
  & \leq D_xX \Big((X+D_xX) f_a(X+D_xX) - X f_a(X)\Big) \\
  & = D_xX D_x(Xf_a(X))
\end{align*}
and
\begin{align*}
  \bigg| \int_0^{D_xX} \Big(\mathbf{1}\{ X+s \leq a\}
  - \mathbf{1}\{ X \leq a\} \Big) \, ds \bigg|
  & \leq -D_xX \Big(\mathbf{1}\{ X+D_xX \leq a\}
    - \mathbf{1}\{ X \leq a\} \Big)\\
  & = -D_xX D_x\mathbf{1}\{ X \leq a\}.
\end{align*}
Combining these estimates with \eqref{eq:delta1} leads to
\begin{align*}
  |I_2| & \leq \BE \int D_xX D_x\big(Xf_a(X)
          - \mathbf{1}\{X\leq a\}\big)  |G_x| \, \lambda(dx) \\ 
        & = \BE \int D_x(Xf_a(X) - \mathbf{1}\{X\leq a\}) G_x |G_x|
          \, \lambda(dx) \\
        & \quad + \BE \int \deltaKS(D_xG) D_x
          \big(Xf_a(X) - \mathbf{1}\{X\leq a\}\big)
          |G_x| \, \lambda(dx) 
          =: I_{2,1} + I_{2,2}.
\end{align*}
The decomposition into two integrals on the right-hand side is allowed as can be seen from the following argument. From Lemma~\ref{lem:SteinKolmogorov} (ii) we know that 
\begin{equation}\label{eqn:bound_solution_indicator}
|uf_a(u)-\mathbf{1}\{u\leq a\}|\leq 2 \quad \text{for all} \quad u\in\R.
\end{equation}
Together with \eqref{eH2}, we see that
\begin{align*}
\BE \int \big|D_x(Xf_a(X) - \mathbf{1}\{X\leq a\}) G_x |G_x|\big| \, \lambda(dx) \leq 4 \BE \int G_x^2 \, \lambda(dx) <\infty.
\end{align*}
It follows from \eqref{eqn:bound_solution_indicator}, the Cauchy--Schwarz inequality, \cite[Corollary~2.4]{las:pec:sch16} and \eqref{eH2}--\eqref{eH4} that 
\begin{align*}
& \BE \int \big|\deltaKS(D_xG) D_x \big(Xf_a(X) - \mathbf{1}\{X\leq a\}\big) |G_x|\big| \, \lambda(dx) \leq 4 \BE \int |\deltaKS(D_xG) G_x| \, \lambda(dx) \\
& \leq 4 \bigg(\BE \int \deltaKS(D_xG)^2 \, \lambda(dx) \bigg)^{1/2} \bigg(\BE \int G_x^2 \, \lambda(dx) \bigg)^{1/2} \\
& \leq 4 \bigg(\BE \int (D_xG_y)^2 \, \lambda^2(d(x,y)) + \BE \int (D^2_{x,z}G_y)^2 \, \lambda^3(d(x,y,z)) \bigg)^{1/2} \bigg( \BE \int G_x^2 \, \lambda(dx) \bigg)^{1/2}<\infty.
\end{align*}
Thus, the integrals $I_{2,1}$ and $I_{2,2}$ are well defined and finite. Moreover, we can interchange expectation and integration in $I_{2,1}$ and $I_{2,2}$ by Fubini's theorem.

We deduce from \eqref{eqn:bound_solution_indicator} for
$Z:=Xf_a(X) - \mathbf{1}\{X\leq a\}$ that
\begin{equation}\label{eqn:condition_partial}
  |Z| \leq 2, \quad |D_xZ|\leq 4 \quad \text{for} \quad \lambda\text{-a.e.}\ x \quad
  \text{and} \quad |D^2_{x,y}Z|\leq 8 \quad \text{for} \quad \lambda^2\text{-a.e.} \ (x,y).
\end{equation}
Note that $\BE \int G_x^4 \, \lambda(dx) <\infty$ since
$T_7<\infty$. Together with \eqref{eH_mixed_2}, we see that
$\BX\ni x\mapsto G_x|G_x|$ satisfies the integrability
conditions \eqref{eH2} and \eqref{eH3} and that
$G|G|\in\operatorname{dom}\deltaKS$. Thus, Lemma~\ref{l3.1} with $G$
replaced by $G |G|$ implies
\begin{displaymath}
  I_{2,1} = \BE \big(Xf_a(X) - \mathbf{1}\{X\leq a\}\big)
  \deltaKS(G |G|).
\end{displaymath}

Since $ D_x\big(Xf_a(X) - \mathbf{1}\{X\leq a\}\big)|G_x|\in L^2(\BP_\eta)$ 
for $\lambda$-a.e.\ $x$ and $D_y( D_x\big(Xf_a(X) - \I\{X\leq a\}\big)|G_x|)\in L^2(\BP_\eta)$ 
for $\lambda^2$-a.e.\ $(x,y)$, Lemma \ref{l3.1} and the product rule \eqref{eq:1} yield
\begin{align*}
  I_{2,2}
  & = \BE \int D_xG_y D^2_{x,y}\big(Xf_a(X)
    - \mathbf{1}\{X\leq a\}\big)  (D_y|G_x|+|G_x|) \,
    \lambda^2(d(x,y)) \\
  & \qquad +  \BE \int D_xG_y D_x\big(Xf_a(X)
    - \mathbf{1}\{X\leq a\}\big)  D_y|G_x| \,
    \lambda^2(d(x,y)).
\end{align*}
The decomposition of $I_{2,2}$ into two integrals is justified since
it follows from \eqref{eqn:bound_solution_indicator}, \eqref{eH3} and
\eqref{eH_mixed} that
\begin{equation}\label{eqn:absolute_integrable_partial}
\begin{split}
& \BE \int |D_xG_y D^2_{x,y}\big(Xf_a(X) - \mathbf{1}\{X\leq a\}\big)  (D_y|G_x|+|G_x|)| \, \lambda^2(d(x,y)) \\
& \leq 8 \BE \int |D_xG_y G_x| + (D_xG_y)^2 \, \lambda^2(d(x,y)) < \infty
\end{split}
\end{equation}
and
\begin{equation}\label{eqn:absolute_integrable_partial_II}
\begin{split}
& \BE \int |D_xG_y D_x\big(Xf_a(X) - \mathbf{1}\{X\leq a\}\big)  D_y|G_x|| \, \lambda^2(d(x,y)) \\
& \leq 4 \BE \int(D_xG_y)^2 \, \lambda^2(d(x,y)) < \infty.
\end{split}
\end{equation}
Note that $h(x,y):=D_xG_y (D_y|G_x|+|G_x|)$ satisfies
\eqref{eqn:integrability_h} because of $T_9<\infty$, so that
$\deltaKS_x(\deltaKS_y(h(x,y)))$ is well defined by
Lemma~\ref{lem:IteratedSkorohodIntegral} (i). Together with
\eqref{eqn:condition_partial} and
\eqref{eqn:absolute_integrable_partial} it follows from Lemma
\ref{lem:IteratedSkorohodIntegral} (ii) that
\begin{multline*}
  \BE \int D_xG_y D^2_{x,y}\big(Xf_a(X) - \mathbf{1}\{X\leq a\}\big)  (D_y|G_x|+|G_x|) \, \lambda^2(d(x,y)) \\
  = \BE \big(Xf_a(X) - \mathbf{1}\{X\leq a\}\big) \deltaKS_x\big(\deltaKS_y(D_xG_y (D_y|G_x|+|G_x|))\big).
\end{multline*}
Because of $T_8<\infty$ we see that
$$
\BE \int \Big( \int D_xG_y D_y|G_x| \, \lambda(dy) \Big)^2 \, \lambda(dx)<\infty
$$
and recall \eqref{eH_mixed_3}, whence
$\BX\ni x\mapsto \int D_xG_y D_y|G_x| \, \lambda(dy)$ satisfies
the integrability assumptions \eqref{eH2} and \eqref{eH3} and belongs
to $\operatorname{dom}\deltaKS$. By \eqref{eqn:condition_partial},
\eqref{eqn:absolute_integrable_partial_II}, Fubini's theorem and
Lemma~\ref{l3.1},
\begin{align*}
\BE \int &D_xG_y D_x\big(Xf_a(X) - \mathbf{1}\{X\leq a\}\big)  D_y|G_x| \, \lambda^2(d(x,y)) \\
& = \int \BE D_x\big(Xf_a(X) - \mathbf{1}\{X\leq a\}\big) \int D_xG_y D_y|G_x| \, \lambda(dy) \, \lambda(dx) \\
& = \BE \big(Xf_a(X) - \mathbf{1}\{X\leq a\}\big) \deltaKS_x\Big( \int D_xG_y D_y|G_x| \, \lambda(dy) \Big).
\end{align*}
We have shown that
\begin{align*}						
I_{2,2} & = \BE \big(Xf_a(X) - \mathbf{1}\{X\leq a\}\big)
            \deltaKS_x\big(\deltaKS_y(D_xG_y (D_y|G_x|+|G_x|))\big) \\
          & \qquad + \BE \big(Xf_a(X) - \mathbf{1}\{X\leq a\}\big)
            \deltaKS_x\Big( \int D_xG_y D_y|G_x| \, \lambda(dy) \Big).
\end{align*}

Now \eqref{eqn:bound_solution_indicator} and Jensen's
inequality yield that
\begin{displaymath}
|I_{2,1}| \leq 2 \BE |\deltaKS(G |G|)|
  \leq 2 \sqrt{ \BE \deltaKS(G |G|)^2 } 
\end{displaymath}
and that
\begin{align*}
  |I_{2,2}|
   &\leq 2 \bigg(\BE \deltaKS_x\big(\deltaKS_y(D_xG_y
    (D_y|G_x|+|G_x|))\big)^2 \bigg)^{1/2} \\
 &\qquad + 2\bigg(\BE \deltaKS_x\Big( \int D_xG_y D_y|G_x| \,
                  \lambda(dy) \Big)^2 \bigg)^{1/2}.
\end{align*}
By \eqref{eq:4}, we have
\begin{displaymath}
\BE \deltaKS(G |G|)^2
  = \BE \int G_x^4 \, \lambda(dx)
  + \BE \int D_x(G_y|G_y|) D_y(G_x|G_x|) \, \lambda^2(d(x,y)) = T_7^2
\end{displaymath}
and
\begin{align*}
  \BE &\deltaKS_x\Big(
  \int D_xG_y D_y|G_x| \,\lambda(dy) \Big)^2 \\
  & \leq \BE \int \bigg( \int D_xG_y D_y|G_x| \,
    \lambda(dy) \bigg)^2 \, \lambda(dx) \\
  & \quad + \BE \int D_x\bigg( \int D_zG_y D_y|G_z| \,
    \lambda(dy) \bigg)  D_z\bigg( \int D_xG_y D_y|G_x| \,
    \lambda(dy) \, \bigg) \, \lambda^2(d(x,z)) \\
  & = T_8^2.
\end{align*}
From Lemma~\ref{lem:IteratedSkorohodIntegral} (i), whose assumptions
are satisfied due to $T_9<\infty$, it follows that
\begin{align*}
\BE \deltaKS_x\big(\deltaKS_y(D_xG_y &(D_y|G_x|+|G_x|))\big)^2 \\
& \leq 3\BE \int (D_xG_y)^2 (D_y|G_x|+|G_x|)^2\, \lambda^2(d(x,y)) \\
& \qquad + 3 \BE \int \big( D_z\big( D_xG_y (D_y|G_x|+|G_x|) \big) \big)^2 \, \lambda^3(d(x,y,z)) \\
& \qquad +2 \BE \int \big( D^2_{z,w}\big( D_xG_y (D_y|G_x|+|G_x|) \big) \big)^2 \,\lambda^4(d(x,y,z,w))
= T_9^2,
\end{align*}
which completes the proof.
\end{proof}

\section{Poisson embedding}\label{sec:Poisson_embedding}

In this section we consider a Poisson process $\eta$ on
$\BX:=\R^d\times\R_+$, whose intensity measure 
$\lambda$ is the product of the Lebesgue measure $\lambda_d$ on
$\R^d$ and the Lebesgue measure $\lambda_+$ on $\R_+$.  We fix a
measurable mapping $\varphi\colon\R^d\times\bN\to[0,\infty]$, 
where the value $\infty$ is allowed for technical convenience.
Then
\begin{align}\label{eqn:xi}
  \xi:=\int \I\{s\in\cdot\}
  \I\{x\le \varphi(s,\eta-\delta_{(s,x)})\}\,\eta(d(s,x)) 
\end{align}
is a point process on $\R^d$. (At this stage it might not be locally
finite.)  Let $u\colon\R^d\to\R$ be a measurable function, and define
$G\colon \bN\times\BX\to\R$ by
\begin{align*}
  G_{(s,x)}(\mu):=u(s)\I\{x\le \varphi(s,\mu)\},
  \quad (\mu,(s,x))\in\bN\times\BX.
\end{align*}
Under suitable integrability assumptions we then have
\begin{align*}
  \deltaKS(G)=\int u(s)\I\{x\le \varphi(s,\eta-\delta_{(s,x)})\}\,\eta(d(s,x))
  -\int u(s)\I\{x\le \varphi(s,\eta)\}\,\lambda(d(s,x)),
\end{align*}
that is, 
\begin{align*}
  \deltaKS(G)=\int u(s)\,\xi(ds)-\int u(s)\varphi(s,\eta)\,ds.
\end{align*}
This can be interpreted as integral of $u$ with respect to the
\emph{compensated} point process $\xi$.
To make the dependence on $u$ more visible, we abuse our
notation and write $\deltaKS(u):=\deltaKS(G)$, whenever this integral is defined
pathwise.

Under certain assumptions, it can be expected that
the standardised $\deltaKS(u)$ is getting close to a 
normal distribution. To establish an asymptotic scenario, 
we take a Borel set $B\subset\R^d$ with $\lambda_d(B)<\infty$ and
define the function $u_B\colon\R^d\to\R$ by $u_B(s):=\I\{s\in B\}u(s)$.
Then $\deltaKS(u_B)$ is the KS-integral of the function
$G_B\colon \bN\times\BX\to\R$, defined by
$G_B(\mu,s,x):=u_B(s)\I\{x\le \varphi(s,\mu)\}$.
We are interested in the normal approximation of
$\deltaKS(u_B)$ for $B$ of growing volume.

\begin{remark}\label{rpredict}
  Assume that $d=1$ and that $\varphi$ is \emph{predictable}, that is,
  $\varphi(t,\mu)=\varphi(t,\mu_{t-})$, where $\mu_{t-}$ is the
  restriction of $\mu\in\bN$ to $(-\infty,t)\times\R_+$.  Then, under
  suitable integrability assumptions (satisfied under our assumptions below)
  $\big(\xi([0,t])-\int^t_0 \varphi(s,\eta)\,ds\big)_{t\ge 0}$ is a
  martingale with respect to the filtration
  $(\sigma(\eta_{(-\infty,t]\times\R_+}))_{t\ge 0}$; see e.g.\
  \cite{LastBrandt95}. Therefore, $(\varphi(t,\cdot))_{t\ge 0}$
  is a \emph{stochastic intensity} of $\xi$ (on $\R_+$) with respect
  to this filtration.
Take $B=[0,T]$ for some $T>0$ and write $u_T:=u_B$.  Then
  $(\deltaKS(u_T))_{T\ge 0}$ is a martingale.  Theorem~3.1 from
  \cite{Torrisi16} provides a quantitative central limit theorem in
  the Wasserstein distance for $\deltaKS(u_T)$. Below we derive a
  similar result using our tools, not only for the Wasserstein but
  also for the Kolmogorov distance. It should be noted that 
  predictability and martingale properties are of no relevance for our
  approach.  All what matters is that $\deltaKS(u_B)$ is a KS-integral
  with respect to the Poisson process $\eta$.
\end{remark}


Before stating some assumptions on $\varphi$, we introduce some useful
terminology.  A mapping $Z$ from $\bN$ to the Borel sets of $\BX$ is
called \emph{graph-measurable} if
$(\mu,s,x)\mapsto \I\{(s,x)\in Z(\mu)\}$ is a measurable
mapping. Given such a mapping, we define a whole family of $Z_t$,
$t\in\R^d$, of such mappings by setting
$$
Z_t(\mu):=Z(\theta_t\mu)+t,
$$
where $\theta_t\mu:=\int \I\{(r-t,z)\in\cdot\}\,\mu(d(r,z))$ is
the shift of $\mu$ by $t$ in the first coordinate, 
and $A+t:=\{(s+t,x):(s,x)\in A\}$ for any $A\subset\R^d\times\R_+$.

We assume that there exists a graph-measurable $Z$ such that
\begin{align}\label{eaa2}
  \varphi(t,\mu+\mu')=\varphi(t,(\mu+\mu')_{Z_t(\mu)}),
  \quad (t,\mu,\mu')\in\R^d\times\bN\times\bN,\,
  \mu'(\BX)\le 3.
\end{align}
Here, we denote by $\nu_A$ the restriction of a measure $\nu$ to a Borel set $A$ of $\BX$. 
Next, we assume that there exists a measurable mapping
$Y\colon\bN\to\R_+$ such that
\begin{align}\label{eaa3}
  \varphi(t,\mu+\mu')\le Y(\theta_t\mu),
  \quad (t,\mu,\mu')\in\R^d\times\bN\times\bN,\, \mu'(\BX)\le 3.
\end{align}
We let $Y_t(\eta)=Y(\theta_t\eta)$ for $t\in\R^d$. As in the
rest of the paper we write $Z_t$, $Y_t$ and $\varphi_t$ instead of
$Z_t(\eta)$, $Y_t(\eta)$ and $\varphi_t(\eta)$ for $t\in\R^d$.
Finally, we need the following integrability assumptions:
\begin{align}\label{eaa5}
&\int_{\R^d} \big(\BE \lambda(Z_0\cap Z_s)^4\big)^{1/4}\,ds<\infty,\\
\label{eaa6}
&\int_{\R_+}\int_{\R^d} \BP((s,x)\in Z_0)^{1/4}\,ds\,dx<\infty,\\
\label{eaa7}
&\int_{\R_+}\int_{\R_+}\int_{\R^d} \BP((s,x)\in Z_0,(0,y)\in Z_s)^{1/3}\,ds\,dx\,dy<\infty,\\
\label{eaa8}
&\BE Y_0^4<\infty.
\end{align}
It follows from Fubini's theorem, H\"older's inequality and
\eqref{eaa6} that $\BE \lambda(Z_0)^4<\infty$.

Assumptions \eqref{eaa3} and \eqref{eaa8} 
justify that $\deltaKS(u_B)$ is defined pathwise if $u$ is bounded.
Moreover, we will see below that 
our assumptions imply that \eqref{eH2} and
\eqref{eH3} hold. Therefore, $G_B$ is in the domain of 
the KS-integral.

Next we illustrate \eqref{eaa2} and \eqref{eaa5}--\eqref{eaa7} with a simple example.
Further examples will be discussed later in the section.

\begin{example}\label{exZd1}
Assume that $d=1$.
A simple (deterministic) choice of the sets $Z_t$ is $Z_t:=[t-h,t)\times C$,
where $h>0$ and $C\subset \R_+$ is  a bounded Borel set. 
If we assume that $\varphi(t,\mu)=\varphi(t,\mu_{Z_t})$ for all $(t,\mu)$,
then \eqref{eaa2} holds, while \eqref{eaa5}--\eqref{eaa7} are trivially true.
To discuss another, less trivial, choice we fix
another Borel set $C'\subset \R_+$ with $0<\lambda_+(C')<\infty$ and $n\in\N$. For $\mu\in \bN$ and
$t\in\R$ let $T^t_n(\mu)$ denote the $n$-th point of $\mu(\cdot\times C')$ strictly before $t\in\R$. 
Define $Z_t(\mu):=[T^t_n(\mu),t)\times C$. 
Then $Z_t(\mu)=Z(\theta_t\mu)+t$, and we have
\begin{align}
Z_t(\mu+\mu')=Z_t((\mu+\mu')_{Z_t(\mu)})\;\text{and}\; Z_t(\mu+\mu')\subset Z_t(\mu),
\quad (t,\mu,\mu')\in\R_+\times\bN\times \bN.
\end{align}
Assuming again that $\varphi(t,\mu)=\varphi(t,\mu_{Z_t})$, we easily obtain
\eqref{eaa2}. It is straightforward to check that \eqref{eaa5}--\eqref{eaa7} hold.
\end{example}

For the normal approximation of $\deltaKS(u_B)$ we have the following result.

\begin{theorem}\label{tembedWK} Let
$\varphi\colon\R^d\times\bN\to[0,\infty]$ be measurable,
and let $Z$ be a graph measurable mapping from $\bN$ to the Borel sets of $\BX$.
Assume that \eqref{eaa2}--\eqref{eaa8} are satisfied.
Let $u\colon\R^d\to\R$ be measurable and bounded, and let $B\subset\R^d$ be
  a Borel set with $\lambda_d(B)<\infty$. Finally, assume 
that $\sigma^2_B:=\BV(\deltaKS(u_B))>0$.
  Then there exists a constant $c>0$, not depending on $B$, such that
  \begin{align}\label{eembeddingWK}
    \max\big\{d_W\big(\sigma_B^{-1}\deltaKS(u_B),N\big),
    d_K\big(\sigma_B^{-1}\deltaKS(u_B),N\big)\big\}
    \le c \lambda_d(B)^{1/2}\sigma^ {-2}_B+c \lambda_d(B)\sigma^ {-3}_B.
  \end{align}
\end{theorem}
\begin{proof} 
  We apply Theorem~\ref{tmain} with $G_B/\sigma_B$ in place of
  $G$. For notational simplicity we omit the subscript $B$ of $G_B$.
  We need to bound the terms $T_i$ for $i\in\{1,\ldots,9\}$.  The
  assumptions of Theorem~\ref{tmain} are checked at the end of the
  proof. For simplicity, assume that $|u|$ is bounded by $1$. 
The value of a constant $c$ might change from line to line.  We often
write $D_{s,x}$ instead of $D_{(s,x)}$.

The term $T_3':= \sigma^{3}_B T_3$ satisfies 
\begin{align*}
T_3'\le \BE \int_B \varphi_s\,ds
\le c \lambda_d(B),
\end{align*}
where the second inequality follows from assumptions \eqref{eaa3} and
\eqref{eaa8}. Here and later we often use that $\theta_s\eta$ and
$\eta$ have the same distribution for each $s\in\R^d$, whence $Y_s$
has the same distribution for all $s\in\R^d$ and the same
holds for $\lambda(Z_s)$.

We deduce from \eqref{eaa2} that, for $(s,x)\in \BX$,
$(t,y)\notin Z_s$ and $\nu\in\mathbf{N}$ with $\nu(\BX)\leq 2$,
\begin{align*}
\I\{ x\leq \varphi_s(\eta+\nu+\delta_{(t,y)}) \} & = \I\{ x\leq \varphi_s((\eta+\nu+\delta_{(t,y)})_{Z_s}) \} \\
& = \I\{ x\leq \varphi_s((\eta+\nu)_{Z_s}) \} = \I\{ x\leq \varphi_s(\eta+\nu) \},
\end{align*}
whence the first three difference operators of
$\I\{ x\leq \varphi_s \}$ vanish if one of the additional points is
outside of $Z_s$. From \eqref{eaa3} we see that
$\I\{ x\leq \varphi_s \}$ and its first three difference operators
become zero if $x> Y_s$. In the following, these
observations are frequently used to bound difference operators in
terms of indicator functions.
 
First we consider $T_1$. Writing the square of the inner integral
as a double integral,
we have
\begin{align*}
T'_1:=\sigma^4_BT^2_1\le
\BE\int \I\{s,r\in B\} |D_{t,y}\I\{x\le \varphi_s\}||D_{t,y}\I\{z\le \varphi_r\}|\,d(s,x,t,y,r,z).
\end{align*}
By the discussed behaviour of the difference operators, 
\begin{align*}
T'_1&\le c\,\BE\int \I\{s,r\in B\} \I\{(t,y)\in Z_s\cap Z_r\}\I\{x\le Y_s,z\le Y_r\}|\,d(s,x,t,y,r,z)\\
&= c\,\BE\int_{B^2} \lambda(Z_s\cap Z_r)Y_sY_r\,d(s,r)\\
& \leq c\,\int_{B^2}  \big(\BE\lambda(Z_s\cap Z_r)^3\big)^{1/3}\big(\BE Y^3_s\big)^{1/3}
\big(\BE Y_r^3\big)^{1/3}\,d(s,r),
\end{align*}
where we have used H\"older's inequality. 
By \eqref{eaa8}, $\BE Y_s^3=\BE Y_r^3=\BE Y_0^3<\infty$.
Moreover,
\begin{align*}
\BE\lambda(Z_s\cap Z_r)^3& 
=\BE\lambda((Z(\theta_s\eta)+s)\cap (Z(\theta_r\eta)+r))^3\\
&=\BE\lambda((Z(\theta_{s-r}\eta)+s-r)\cap Z(\eta))^3.
\end{align*}
Therefore,
\begin{align*}
T'_1&\le c\,\int\I\{s\in\R^d,r\in B\}
\big(\BE \lambda((Z(\theta_{s}\eta)+s)\cap Z(\eta))^3\big)^{1/3}\,d(s,r)\\
&=c\lambda_d(B) \int_{\R^d}\big(\BE \lambda(Z_s\cap Z_0)^3\big)^{1/3}\,ds
\le c\lambda_d(B),
\end{align*}
where we have used assumption \eqref{eaa5}
(and the monotonicity of $L_p$-norms).
Hence, $T_1\le c \lambda_d(B)^{1/2}\sigma^ {-2}_B$, as required by
\eqref{eembeddingWK}.

For the term $T_2$, we have
\begin{align*}
T'_2:=\sigma^4_BT^2_2\le
\BE\int\bigg(\int \I\{s,t\in B\}|D_{r,z}(D_{s,x}\I\{y\le \varphi_t\} D_{t,y}\I\{x\le \varphi_s\})|
\,d(s,x,t,y)\bigg)^2d(r,z).
\end{align*}
The inner integrand does only contribute
if $(s,x)\in Z_t$, $(t,y)\in Z_s$, and $(r,z)\in Z_t$ or $(r,z)\in Z_s$.
Since the last two cases are symmetric, $T'_2$ can be bounded by
\begin{align*}
c\,\BE\int\bigg(\int \I\{t\in B\} \I\{(r,z)\in Z_t,(s,x)\in Z_t,(t,y)\in Z_s\}
\,d(s,x,t,y)\bigg)^2d(r,z).
\end{align*}
By Fubini's theorem, 
\begin{align*}
T'_{2}
&\le c\,\BE\int \I\{t,t'\in B\} \lambda(Z_t\cap Z_{t'})
\I\{(s,x)\in Z_t,(t,y)\in Z_s,(s',x')\in Z_{t'},(t',y')\in Z_{s'}\} \\
& \qquad\qquad\qquad\times \,d(s,x,t,y,s',x',t',y')\\
&\le c\int \I\{t,t'\in B\}  \big(\BE\lambda(Z_t\cap Z_{t'})^3\big)^{1/3}
\,\BP((s,x)\in Z_t,(t,y)\in Z_s)^{1/3}\\
&\qquad\qquad\qquad\times \BP((s',x')\in Z_{t'},(t',y')\in Z_{s'})^{1/3}
\,d(s,x,t,y,s',x',t',y').
\end{align*}
By definition of $Z_t$ and $Z_s$ and the distributional invariance of $\eta$,
\begin{align*}
\BP((s,x)\in Z_t,(t,y)\in Z_s)
&=\BP((s-t,x)\in Z(\theta_t\eta),(t-s,y)\in Z(\theta_s\eta))\\
&=\BP((s-t,x)\in Z(\eta),(t-s,y)\in Z(\theta_{s-t}\eta)).
\end{align*}
Changing variables yields that
\begin{align*}
T'_{2} &\le cb^2\,\int_{B^2} (\BE\lambda(Z_t\cap Z_{t'})^3)^{1/3} \,d(t,t'),
\end{align*}
where
\begin{align*}
b:=\int\BP((s,x)\in Z(\eta),(-s,y)\in Z(\theta_{s}\eta))^{1/3}\,d(s,x,y).
\end{align*}
Since  
\begin{align*}
\BP((s,x)\in Z(\eta),(-s,y)\in Z(\theta_{s}\eta))
=\BP((s,x)\in Z_0,(0,y)\in Z_s),
\end{align*}
we obtain from assumption \eqref{eaa7} that $b<\infty$. Hence,
\begin{align*}
T'_{2} &\le c\,\int_{B^2} \big(\BE \lambda(Z_t\cap Z_{t'})^{3}\big)^{1/3} \,d(t,t')
=c \int_{B^2} \big(\BE \lambda(Z_{t-t'}\cap Z_{0})^3\big)^{1/3} \,d(t,t')\le c\lambda_d(B),
\end{align*}
where we have used assumption \eqref{eaa5}.

Each of the summands in the term $T'_4:=\sigma^3_BT_4$ includes the
factor $D_{s,x}\I\{ y\leq \varphi_t \}$, so that
\begin{align*}
T_4' & \leq c\, \BE \int \I\{t\in B\} \I\{(s,x)\in Z_t\} \I\{y \leq Y_t\} \, d(s,x,t,y) = c \BE \int_{B} \lambda(Z_t) Y_t \, dt \\
&  \leq c \int_{B} \big(\BE \lambda(Z_t)^2 \big)^{1/2}  \big(\BE Y_t^2 \big)^{1/2} \, dt = c \big(\BE \lambda(Z_0)^2\big)^{1/2}  \big(\BE Y_0^2 \big)^{1/2} \lambda_d(B).
\end{align*}

For $T'_5:=\sigma^3_BT_5$, we have
$$
T_5' \leq c\, \BE \int \I\{r\in B\} \I\{(s,x) \in Z_t\} \I\{ (t,y) \in Z_r \} \I\{z\leq Y_r\} \, d(s,x,t,y,r,z),
$$
where in the second term of $T_5$ we renamed $x$ as $y$ and vice
versa. This leads to the upper bound
\begin{align*}
T_5' & \leq c\, \BE \int \I\{r\in B\} \I\{ (t,y) \in Z_r \} \lambda(Z_t) Y_r \, d(t,y,r) \\
& \leq c \int \I\{r\in B\} \BP((t,y) \in Z_r)^{1/3} \big(\BE\lambda(Z_t)^3\big)^{1/3} \big(\BE Y_r^3\big)^{1/3} \, d(t,y,r) \\
& = c \big(\BE\lambda(Z_0)^3\big)^{1/3} \big(\BE Y_0^3\big)^{1/3} \int \BP((t,y) \in Z_0)^{1/3} \, d(t,y) \lambda_d(B).
\end{align*}

We can rewrite $T'_6:=\sigma^4_B T_6^2$ as sum of $T_{6,1}'$ and $T_{6,2}'$ with
\begin{align*}
T_{6,1}' & \leq c\, \BE \int \bigg( \int \I\{t\in B\} \I\{ y\leq Y_t \} \I\{(s,x)\in Z_t\} \, d(s,x) \bigg)^2  \, d(t,y) \\
& \leq c\, \BE \int \I\{t\in B\} \I\{ y\leq Y_t \} \lambda(Z_t)^2 \, d(t,y) = c \BE \int_B Y_t \lambda(Z_t)^2 \, dt \\
& \leq c \int_B \big(\BE Y_t^3\big)^{1/3} \big(\BE \lambda(Z_t)^3\big)^{2/3} \, dt = c \big(\BE Y_0^3\big)^{1/3} \big(\BE \lambda(Z_0)^3\big)^{2/3} \lambda_d(B)
\end{align*}
and
\begin{align*}
T_{6,2}' & \leq c\, \BE \int \bigg( \int \I\{t\in B\} \I\{ y\leq Y_t \} \I\{(s,x)\in Z_t\} \I\{ (r,z)\in Z_s\cup Z_t \} \, d(s,x)  \bigg)^2  \, d(t,y,r,z) \\
& = c\, \BE \int \I\{t\in B\} \I\{ y\leq Y_t \} \I\{(s,x)\in Z_t\} \I\{ (r,z)\in Z_s\cup Z_t \} \\
& \hspace{4cm} \times \I\{(s',x')\in Z_t\} \I\{ (r,z)\in Z_{s'}\cup Z_t \} \, d(s,x,s',x',t,y,r,z) \\
& = c\, \BE \int \I\{t\in B\} Y_t \I\{(s,x),(s',x')\in Z_t\} \lambda(( Z_s\cup Z_t)\cap (Z_{s'}\cup Z_t)) \, d(s,x,s',x',t) \\
& \leq c \int \I\{t\in B\}  \BP((s,x)\in Z_t)^{1/4} \BP((s',x')\in Z_t)^{1/4} \big(\BE Y_t^4\big)^{1/4} \\
& \hspace{4cm} \times \big( \big(\BE \lambda(Z_s)^4\big)^{1/4}+\big(\BE \lambda(Z_t)^4\big)^{1/4}\big) \, d(s,x,s',x',t) \\
& = 2c \big(\BE Y_0^4\big)^{1/4} \big(\BE \lambda(Z_0)^4\big)^{1/4} \bigg( \int \BP((s,x)\in Z_0)^{1/4} \, d(s,x) \bigg)^2 \lambda_d(B).
\end{align*}

For $T'_7:=\sigma^4_B T_7^2$, the first term can be bounded as $T_3'$, while the second term is bounded by
\begin{equation}\label{eqn:Integral_2}
c\, \BE \int \I\{t\in B\} \I\{ (s,x)\in Z_t\} \I\{y\leq Y_t\} \, d(s,x,t,y),
\end{equation}
which we treated above in order to control $T_4'$.

We can decompose $T_8':=\sigma_B^4 T_8^2$ into two terms $T_{8,1}'$ and $T_{8,2}'$, where $T_{8,1}'$ can be bounded as $T_{6,1}'$. Since the product of two difference operators in $T_{8,2}'$ is bounded by the sum of the squared difference operators, $T_{8,2}'$ can be controlled as $T_{6,2}'$.

Note that $T_9':= \sigma_B^4 T_9^2$ can be written as a sum of
three terms $T_{9,1}',T_{9,2}',T_{9,3}'$, where $T_{9,i}'$ is an
integral with respect to $i$ points for $i\in\{1,2,3\}$. The term
$T_{9,1}'$ can be bounded by \eqref{eqn:Integral_2}, while
\begin{align*}
T_{9,2}' & \leq c\, \BE \int \I\{t\in B\} \I\{ y \leq Y_t \} \I\{(s,x)\in Z_t\} \I\{ (r,z) \in Z_s \cup Z_t \} \, d(s,x,t,y,r,z) \\
& \leq c\, \BE \int \I\{t\in B\} Y_t \I\{(s,x)\in Z_t\} (\lambda(Z_s) + \lambda(Z_t)) \, d(s,x,t) \\
& \leq c \int \I\{t\in B\} \big(\BE Y_t^3\big)^{1/3} \BP((s,x)\in Z_t)^{1/3} \big(\big(\BE\lambda(Z_s)^3\big)^{1/3} + \big(\BE \lambda(Z_t)^3\big)^{1/3}\big) \, d(s,x,t) \\
& \leq 2c \big(\BE Y_0^3 \big)^{1/3} \big(\BE\lambda(Z_0)^3\big)^{1/3} \int \BP((s,x)\in Z_0)^{1/3}  \, d(s,x) \lambda_d(B).
\end{align*}
For $T_{9,3}'$ we deduce the bound
\begin{align*}
T_{9,3}' & \leq c\, \BE\int \I\{t\in B\}\I\{y\le Y_t\} \I((s,x)\in Z_t) \I\{(s',x'),(r,z)\in Z_s\cup Z_t\}\, d(s,x,t,y,r,z,s',x') \\
& \leq c\, \BE\int \I\{t\in B\} Y_t \I((s,x)\in Z_t)(\lambda(Z_s)+\lambda(Z_t))^2 \, d(s,x,t),
\end{align*}
which can be treated similarly as in the computation for $T_{9,2}'$ but with the power $4$.

Finally, we check the assumptions of Theorem \ref{tmain}. The expression in \eqref{eH2} can be treated as $T_3'$, while \eqref{eH3}, \eqref{eH_mixed} and \eqref{eH_mixed_2} can be bounded as $T_4'$. Similarly, we can verify \eqref{eH4}, \eqref{eH5} and \eqref{eH_mixed_3} by using the computations for $T_{9,2}'$, $T_{9,3}'$ and $T_{6,2}'$, respectively. 
\end{proof}

\begin{remark}\label{rpredict3}
Theorem \ref{tembedWK} can be used to establish central limit theorems.
Consider, for instance, the setting of Remark~\ref{rpredict}.
Two possible choices of $Z_t$ are provided in Example~\ref{exZd1}.
Since $\varphi$ is assumed to be predictable in Remark~\ref{rpredict},
the cyclic condition \eqref{eq:3} is satisfied and \eqref{evariance} simplifies to 
\begin{align*}
\sigma_T^2:=\BV(\deltaKS(u_T))=\int^T_0 u(t)^2\, \BE\varphi(t,\eta)\,dt.
\end{align*}
It is natural to assume that $\sigma_T^2\ge c T$ for some $c>0$ and
all sufficiently large $T$. If, additionally, the assumptions of Theorem \ref{tembedWK}
are satisfied, then \eqref{eembeddingWK} shows that
\begin{align*}
\max\big\{d_W\big(\sigma_T^{-1}\deltaKS(u_T),N\big),d_K\big(\sigma_T^{-1}\deltaKS(u_T),N\big)\big\}
\le c' T^{-1/2}
\end{align*}
for some $c'>0$ and all sufficiently large $T$.
It does not seem to be possible to derive  
the Wasserstein part of this bound from \cite[Theorem 3.1]{Torrisi16};
see also \cite[Remark 3.8]{HHKR21}. The reason is that the third
term on the right-hand side of \cite[(3.9)]{Torrisi16} does not have the
appropriate order.
\end{remark}

\begin{example}
Let $h\colon\R^d\to\R_+$ be a measurable
function satisfying $\int (h(s)+h(s)^2)\,ds<\infty$. Define
$Z:=\{(s,x)\in\R^d\times\R_+: x\le h(s)\}$ and
$Z_t:=Z+t$, $t\in\R^d$.
We interpret $Z$ and $Z_t$ as constant mappings on $\bN$ 
and check that \eqref{eaa5}-\eqref{eaa7} are satisfied.
For \eqref{eaa5} we note that
\begin{align*}
\int \lambda(Z_0\cap Z_s)\,ds
&=\int \I\{y\le h(t),y\le h(t-s)\}\,d(t,y,s)\\ 
&=\int \I\{y\le h(t),y\le h(s)\}\,d(t,y,s)
=\int \bigg(\int \I\{y\le h(s)\}\,ds\bigg)^2\,dy. 
\end{align*}
Since $h$ is square integrable, we have
$\int \I\{y\le h(s)\}\,ds\le c y^{-2}$ for some $c>0$,
so that the above integral is finite.
Relation \eqref{eaa6} follows at once from the integrability
of $h$, while the left-hand side of \eqref{eaa7}
is bounded by $\int h(s)^2\,ds$.

Assume now that the function $\varphi$ satisfies
\begin{align*}
\varphi(t,\mu)=\varphi(t,\mu_{Z_t}),\quad (t,\mu)\in\R^d\times\bN.
\end{align*}
Then \eqref{eaa2} holds. Assumptions \eqref{eaa3} and \eqref{eaa8}
depend on the choice of $\varphi$. They are satisfied, for instance,
if $\varphi(t,\cdot)$ is a polynomial or exponential function of
$\mu(Z_t)$.

Assume that $u$ and $\BE \varphi(\cdot,\eta_{Z})$ have a lower bound
$c>0$ and that $\varphi(s,\cdot)$ is for all $s\in\R^d$ either increasing
or decreasing when adding a point.
Then Theorem \ref{tembedWK}
yields a (quantitative) central limit theorem for $\lambda_d(B)\to\infty$.
To this end,
we need to find a lower bound for $\sigma^2_B$, given
by \eqref{evariance}. In our case the first term on the right-hand side
of \eqref{evariance} equals
\begin{align*}
\BE \int \I\{s\in B\}u(s)^2\I\{x\le \varphi(s,\eta_{Z_s})\}\,d(s,x)
\end{align*}
and has the lower bound
\begin{align*}
c^2\,\int \I\{s\in B\}\BE \varphi(s,\eta_{Z_s})\,ds\ge c^3\lambda_d(B).
\end{align*}
The second term is given by
\begin{align*}
\BE \int \I\{s,t\in B\}u(s)u(t) D_{t,y}\I\{x\le\varphi(s,\eta)\}D_{s,x}\I\{y\le\varphi(t,\eta)\}\,d(s,x,t,y).
\end{align*}
By the monotonicity assumption on $\varphi$ and $u\geq c$, this is non-negative.
\end{example}

\begin{example}
For a point configuration $\mu\in\mathbf{N}$ and $w\in\BX$ the Voronoi cell of $w$ is given by
$$
V(w,\mu) := \{v\in \BX: \|w-v\| \leq \|w'-v\| \text{ for all } w'\in\mu \},
$$
i.e., $V(w,\mu)$ is the set of all points in $\BX$ such that no point
of $\mu$ is closer than $w$. The cells $(V(w,\mu))_{w\in\mu}$ have
disjoint interiors and form a tessellation of $\BX$, the so-called
Voronoi tessellation, which is an often studied model from stochastic
geometry (see e.g.\ \cite[Section 10.2]{SW}). From the
Poisson--Voronoi tessellation (i.e., the Voronoi tessellation with
respect to $\eta$) we construct the point process
\begin{equation}\label{eqn:xi_PV}
\xi:= \int \mathbf{1}\{s\in \cdot\} \mathbf{1}\{ V((s,x),\eta)\cap (\R^d\times\{0\})\neq \varnothing \} \, \eta(d(s,x)).
\end{equation}
This point process has the following geometric interpretation. We take
all cells of the Poisson--Voronoi tessellation that intersect
$\R^d\times\{0\}$, which one can think of as the lowest layer
of the Poisson--Voronoi tessellation, and the first coordinates of
their nuclei are the points of $\xi$. The points of $\xi$ build the
projection of a one-sided version of the Markov path considered
in \cite{BTZ00}.

First we check that $\xi$ can be represented as in \eqref{eqn:xi}. For
$s\in\R^d$, $x_1,x_2\in\R_+$ with $x_1<x_2$ and
$\mu\in\mathbf{N}$ we have
\begin{equation}\label{eqn:monotonicity_intersection}
  V((s,x_1),\mu) \cap (\R^d\times\{0\}) \supset V((s,x_2),\mu) \cap (\mathbb{R}^d\times\{0\}).
\end{equation}
If $V((s,0),\mu)$ is bounded, which is for $\BP_\eta$-a.e.\ $\mu$ the case, 
there exists a unique $x_0\in\R_+$ such that
$V((s,x_0),\mu) \cap (\R^d\times\{0\})$ is exactly a single
point. This allows us to rewrite $\xi$ as
$$
\xi= \int \I\{s\in \cdot\} \I\{ x \leq \varphi(s,\eta-\delta_{(s,x)}) \} \, \eta(d(s,x))
$$
with
$$
\varphi(s,\mu):=\sup\{x\in\R_+: V((s,x),\mu) \cap (\R^d\times\{0\})\neq \varnothing \}.
$$

For $s\in\R^d$ and $\mu\in\mathbf{N}$ let
$$
R(s,\mu):=\sup\{\|(s,0)-v\|: v\in V((s,0),\mu)\},
$$
which is the maximal distance from $(s,0)$ to a point of its Voronoi
cell. Note that $V((s,0),\mu)$ is completely determined by the points
of $\mu$ in $B((s,0),2R(s,\mu))$, the closed ball in $\BX$ with radius
$2R(s,\mu)$ around $(s,0)$. Indeed, the centres of all neighbouring
cells to the Voronoi cell
of $(s,0)$ are within this ball and all other
points of $\eta$ outside are too far
away to affect the cell. If we consider
$V((s,x),\mu)\cap (\R^d\times\{0\})$ as a function of $x$, for
increasing $x$ the sets $V((s,x),\mu)\cap (\R^d\times\{0\})$
are not increasing (see \eqref{eqn:monotonicity_intersection}) and
$(V((s,0),\mu)\cap (\R^d\times\{0\})) \setminus
(V((s,x),\mu)\cap (\R^d\times\{0\}))$ is divided among the
neighbouring cells of $V((s,0),\mu)$. This implies that
$V((s,x),\mu)\cap (\R^d\times\{0\})$ is also completely
determined by the points in $B(s,2R((s,0),\mu))$. Hence, we can
conclude that
$$
\varphi(s,\mu) = \varphi(s,\mu_{B((s,0),2R(s,\mu))}).
$$
Since this identity is still valid if we restrict $\mu$ to a larger
set on the right-hand side and $R$ is non-increasing with respect to
the point configuration, we obtain
$$
\varphi(s,\mu+\mu') = \varphi(s, (\mu+\mu')_{B((s,0),2R(s,\mu+\mu'))} ) = \varphi(s, (\mu+\mu')_{B((s,0),2R(s,\mu))})
$$
for all $\mu'\in\mathbf{N}$ with $\mu'(\BX)\leq 3$, which is \eqref{eaa2} with $Z_s=B((s,0),2R(s,\mu))$. Since for each point of $V((s,0),\mu)\cap (\R^d\times\{0\})$, there exists a point of $\mu$ different from $(s,0)$ which is at most $2R(s,\mu)$ away, we obtain
$$
\varphi(s,\mu+\mu') \leq \varphi(s,\mu) \leq 2R(s,\mu), 
$$
which is \eqref{eaa3}.

Note that for any $s\in\R^d$ one can partition $\BX$ into
  finitely many cones $\mathcal{C}_1,\hdots,\mathcal{C}_m$ with apex
  $(s,0)$ such that
$$
\max_{i\in\{1,\hdots,m\}} \inf_{y\in \mu \cap \mathcal{C}_i} \|y - (s,0)\| \geq R(s,\mu)
$$
for all $\mu\in\mathbf{N}$ (see
e.g.\ \cite[Subsection~6.3]{P07}). Hence, there
exist constants $C,c>0$ such that
$$
\mathbb{P}(R(s,\eta)\geq u) \leq C \exp(-c u^{d+1})
$$
for all $u\geq 0$ and $s\in\R^d$. 
Using this exponential decay it is easy to verify
\eqref{eaa5}--\eqref{eaa8}. Relations \eqref{eaa6} and \eqref{eaa8} 
are obvious.  To see \eqref{eaa5}, we can use the bound
\begin{multline*}
\lambda(B((0,0),2R(0,\eta))\cap B((s,0),2R(s,\eta)))^4\\
\le \I\{2R(0,\eta)>\|s\|/2\}\lambda(B((0,0),2R(0,\eta)))^4\\
+\I\{2R(s,\eta)>\|s\|/2\}\lambda(B((s,0),2R(s,\eta)))^4.
\end{multline*}
For \eqref{eaa7} we can bound
$\BP((s,x)\in B((0,0),2R(0,\eta)), (0,y)\in B((s,0),2R(s,\eta)))$
by the Cauchy--Schwarz inequality 
and then bound the resulting integral.
This yields that the conclusions of
Theorem~\ref{tembedWK} hold for the point process $\xi$ from
\eqref{eqn:xi_PV}.

Since $\varphi$ is non-increasing with respect to additional points,
one can argue as in the previous example to see that there is a lower
bound for the variance of order $\lambda_d(B)$ if $u>c_0$ for some
$c_0>0$. This yields a (quantitative) central limit theorem as
$\lambda_d(B)\to\infty$.
\end{example}

\section{Functionals generated by a partial order}
\label{sec:integrals-functions}

In this section we return to the setting of a general $\sigma$-finite
measure space $(\BX,\cX,\lambda)$.  In many situations, the functional
$G_x$ can be written as $G_x(\mu)=f(x)H_x(\mu)$, where
$f\in L^2(\lambda)$ and the functional $H_x(\mu)$ is measurable in
both arguments, takes values in $\{0,1\}$ and can be decomposed as
\begin{equation}
  \label{eq:2}
  H_x(\mu)=\prod_{y\in\mu} H_x(\Dirac_y).
\end{equation}
Write shortly $H_x(y)$ instead of $H_x(\Dirac_y)$, and denote
$\bH_x(y):=1-H_x(y)$. A generic way to construct such functionals is
to consider a strict partial order $\prec$ on $\BX$ and to set
$H_x(y):=1-\I\{y\prec x\}$. The set of points $x\in\eta$ such that
$H_x(\eta)=1$ is called the set of \emph{Pareto optimal} points with
respect to the chosen partial order, i.e., $x\in\eta$ is Pareto
optimal if there exists no $y\in\eta$ such that $y\prec x$. For
$x\notin\eta$, we have $H_x(\eta)=1$ if $x$ is Pareto optimal in
$\eta+\Dirac_x$.  If $\deltaKS(G)$ can be defined pathwise as in
\eqref{e1.1}, then it equals the sum of the values of $f$ over Pareto
optimal points centred by the integral of $f$ over the set of $x$ such
that $H_x(\eta)=1$.  As shown in \cite{LM21}, such examples naturally
arise in statistical applications.

It is easy to see by induction that 
\begin{equation}
  \label{eq:7}
  D^m_{z_1,\dots,z_m}G_x(\mu)=(-1)^m f(x)  H_x(\mu) \prod_{i=1}^m
  \bH_x(z_i). 
\end{equation}
In particular,
\begin{equation}
  \label{eq:6}
  D_zG_x(\mu)=-f(x)H_x(\mu)\bH_x(z).
\end{equation}
By construction, $H_y(\eta)=1$ and $\bH_y(x)=1$ yield that
$H_x(\eta)=1$, which can be expressed as
\begin{equation}\label{eqn:H3H2}
  H_x(\eta)H_y(\eta)\bH_y(x)=H_y(\eta)\bH_y(x),
\end{equation}
so that
\begin{equation}
  \label{eq:9}
  G_xD_xG_y =f(x) D_xG_y.
\end{equation}

The asymmetry property of the strict partial order implies that
$\bH_x(y)\bH_y(x)=0$ for all $x,y\in\BX$. Hence, the functional $G$
satisfies the cyclic condition \eqref{eq:3}. Thus, the second term on
the right-hand side of \eqref{evariance} vanishes. If \eqref{eH2}
and \eqref{eH3} are satisfied, it follows from
\cite[Proposition~2.3]{las:pec:sch16} that the KS-integral $\deltaKS(G)$
of $G$ is well defined and
\begin{equation}
  \label{eq:10}
  \BE \deltaKS(G)^2=\BE \int f(x)^2 H_x(\eta)\,\lambda(dx).
\end{equation}

In addition, property \eqref{eq:2} leads to a considerable
simplification of the terms arising in the bounds in
Corollary~\ref{cor:cyclic}.  Write $H_x$ as a shorthand for $H_x(\eta)$,
denote
\begin{displaymath}
  h_i(y):=\int f(x)^i\bH_y(x)\,\lambda(dx), \quad i=0,1,2,
\end{displaymath}
and
\begin{displaymath}
  \tilde{h}(y):=\int |f(x)|\bH_y(x)\,\lambda(dx).
\end{displaymath}

\begin{proposition}\label{prop:7.1}
  Assume that $G_x(\mu)=f(x)H_x(\mu)$, where $f\in L^2(\lambda)$ and the functional $H$ is 
determined by \eqref{eq:2} from a strict
  partial order on $\BX$. Then the terms $T_2$ and $T_8$ defined before
  Theorem~\ref{tmain} vanish and the other terms satisfy
  \begin{align*}
    T_1&=\bigg(\int f(x)^2f(z)^2 \BE H_xH_z \bH_x(y)\bH_z(y)
           \,\lambda^3(d(x,y,z))\bigg)^{1/2},\\
    T_3&=\int |f(x)|^3 \BE H_x\,\lambda(dx),\\
    T_4&\leq \int \big(2 h_2(y)|f(y)|+3 \tilde{h}(y)f(y)^2\big)
         \BE H_y\,\lambda(dy),\\
    T_5&\leq 8 \int \tilde{h}(z)^2|f(z)|\BE H_z\,\lambda(dz),\\
    T_6&=\bigg(\int \big(f(y)h_1(y)\big)^2\big(1+h_0(y)\big)
         \BE H_y\,\lambda(dy)\bigg)^{1/2},\\
    T_7&= \Big(\int |f(x)|^4 \BE H_x \,\lambda(dx)\Big)^{1/2},\\
    T_9&= \bigg(\int f(y)^2\Big[3+3h_0(y)+2h_0(y)^2\Big]h_2(y)
       \BE H_y \,\lambda(dy)\bigg)^{1/2}.
  \end{align*}
	Suppose $\BV \deltaKS(G)>0$ and that \eqref{eH2}--\eqref{eH5} are satisfied. Then
	$$
	d_W\bigg(\frac{\deltaKS(G)}{\sqrt{\BV \deltaKS(G)}},N\bigg)\le \frac{T_1}{\BV \deltaKS(G)} + \frac{T_3+T_4+T_5}{\sqrt{\BV \deltaKS(G)}^3}.
	$$
	If, additionally, \eqref{eH_mixed}--\eqref{eH_mixed_3} are satisfied, then
	$$
	d_K\bigg(\frac{\deltaKS(G)}{\sqrt{\BV \deltaKS(G)}},N\bigg)\le \frac{T_1+T_6+2(T_7+T_9)}{\BV \deltaKS(G)}.
	$$
\end{proposition}
\begin{proof}
 
  The expression for $T_1$ follows from $G_x^2=f(x)G_x$ for
  $x\in\BX$ and \eqref{eq:6}, while $T_3$ results from the
  definition of $G_x$.  Now consider the further terms, appearing in
  Corollary~\ref{cor:cyclic}. 
We rely on \eqref{eq:7} with $m=2,3$, \eqref{eq:6}, and \eqref{eq:9} in the subsequent calculations. 
First,  \begin{align*}
    T_4&=\BE \int \Big(2f(x)^2 |f(y)| H_y\bH_y(x)\\
       &\qquad\qquad +|f(x)|f(y)^2
       H_y\bH_y(x)\big(2H_y+H_y\bH_y(x)\big)
       \Big)\,\lambda^2(d(x,y))\\
     &\leq \int \big(2f(x)^2 |f(y)|+3 |f(x)|f(y)^2\big)
       \BE H_y\bH_y(x) \,\lambda^2(d(x,y)),
  \end{align*}
  which yields the expression for $T_4$ in view of the definitions of the
  functions $h_2$ and $\tilde{h}$.  Next,
  \begin{align*}
    T_5&=\BE \int 2|f(x)f(y)f(z)|
       \big(H_z\bH_z(y)+H_z\bH_z(y)\bH_z(x)\big)\\
     &\qquad\qquad\qquad\qquad\times \big(H_y\bH_y(x)\bH_y(z)+ 2 H_y\bH_y(x)\big)
       \,\lambda^3(d(x,y,z)) \allowdisplaybreaks \\
     &= \BE \int 2|f(x)f(y)f(z)|
       H_z\bH_z(y)\big(1+\bH_z(x)\big)
       H_y\bH_y(x)\big(\bH_y(z)+2\big)
       \,\lambda^3(d(x,y,z)) \allowdisplaybreaks \\
     &\leq 8 \BE \int |f(x)f(y)f(z)|
       H_z\bH_z(y)H_y\bH_y(x) \,\lambda^3(d(x,y,z)) \allowdisplaybreaks \\
     &= 8 \int |f(x)f(y)f(z)|\BE H_z\bH_z(y)\bH_y(x)
       \,\lambda^3(d(x,y,z)),
\end{align*}
where we used the fact that $\bH_z(y) \bH_y(z)=0$ for all $y$ and
$z$ as well as \eqref{eqn:H3H2}. This yields the sought bound for $T_5$, taking into account that
$\bH_z(y)\bH_y(x)\leq \bH_z(y)\bH_z(x)$. 
Next, $T_6=(T_{6,1}+T_{6,2})^{1/2}$, where
\begin{align*}
  T_{6,1}&:=\BE\int \Big(\int
  f(x)f(y) H_y\bH_y(x) \,\lambda(dx)\Big)^2\,\lambda(dy)\\
         &=\int f(y)^2 \BE H_y
           \Big(\int f(x)\bH_y(x)\,\lambda(dx)\Big)^2\,\lambda(dy)
         =\int f(y)^2 h_1(y)^2 \BE H_y\,\lambda(dy)
\end{align*}
and 
\begin{align*}
  T_{6,2}&:=\BE \int \Big(\int
           f(x)f(y)H_y\bH_y(x)\bH_y(z)\,\lambda(dx)\Big)^2\,\lambda^2(d(y,z))\\
  &=\int f(y)^2 \BE H_y \bH_y(z) h_1(y)^2\,\lambda^2(d(y,z)).
\end{align*}
Hence, the expression for $T_6$ follows. The expression for $T_7$
follows directly from the definition of $G_x$. 
Finally, $T_9=(3T_{9,1}+3T_{9,2}+2T_{9,3})^{1/2}$, where
\begin{align*}
  T_{9,1}&:=\int f(x)^2 f(y)^2 \BE H_y\bH_y(x)\,\lambda^2(d(x,y)),\\
  T_{9,2}
    &:=\int f(x)^2f(y)^2 \BE H_y\bH_y(x)\bH_y(z)\,\lambda^3(d(x,y,z)),\\
  T_{9,3}&:=\int f(x)^2f(y)^2\BE H_y\bH_y(x)\bH_y(z)\bH_y(w)
  \,\lambda^4(d(x,y,z,w)).
\end{align*}
Thus, 
\begin{align*}
  T_9 &= \bigg(\int f(x)^2f(y)^2\Big[3+3h_0(y)+2h_0(y)^2]
       \BE H_y \bH_y(x)\,\lambda^2(d(x,y))\bigg)^{1/2},
\end{align*}
which yields the formula for $T_9$. The bounds for the normal approximation follow from Corollary \ref{cor:cyclic} and the normalisation by $\sqrt{\BV \deltaKS(G)}$.
\end{proof}

\begin{example}
  Let $\BX$ be the unit cube $[0,1]^d$ with the Lebesgue measure
  $\lambda$. For $x,y\in\BX$, write $y\prec x$ if $x\neq y$ and all
  components of $y$ are not greater than the corresponding components
  of $x$. Let $G_x(\mu)=H_x(\mu)$, with $H_x(\mu)$ given by
  \eqref{eq:2} and $\bH_x(y):=\I\{y\prec x\}$.

  Let $\eta_t$ be the Poisson process on $\BX$ of intensity
  $t\lambda$.  Then $G_x(\eta_t)=1$ means that none of the points
  $y\in\eta_t$ satisfies $y\prec x$, that is, none of the points from
  $\eta_t$ is smaller than $x$ in the coordinatewise order. In this
  case, $x$ is said to be a Pareto optimal point in
  $\eta_t+\Dirac_x$. Then $\deltaKS(G)$ equals the difference between
  the number of Pareto optimal points in $\eta_t$ and the volume of
  the complement of the set of points $x\in\BX$ such that $y\prec x$
  for at least one $y\in\eta_t$.

  For $x=(x_1,\dots,x_d)\in\BX$, denote $|x|:=x_1\cdots x_d$. Then
  $\BE H_x(\eta_t)=e^{-t|x|}$, and \eqref{eq:10} yields that the
  variance of $\deltaKS(G)$ is
  \begin{displaymath}
    \sigma^2_t:=t\int e^{-t|z|}\, \lambda(dz).
  \end{displaymath}
  It is shown in \cite{bai05} that the right-hand side is of order
  $\log^{d-1} t$ for large $t$. Note that the above formula gives also
  the expected number of Pareto optimal points.

  Quantitative limit theorems for the number of Pareto optimal points
  centred by subtracting the mean and scaled by the standard
  deviation were obtained in \cite{bhat:mol21}. Below we derive a
  variant of such result for the KS-integral, which
  involves a different stochastic centring.

Since $G_x(\eta)=f(x)H_x(\eta)$ with the function $f$
identically equal one and the measure $\lambda$ is finite, 
the integrability conditions \eqref{eH2}--\eqref{eH5}, 
and \eqref{eH_mixed}--\eqref{eH_mixed_3} are satisfied.
The terms arising in Proposition \ref{prop:7.1}
can be calculated as follows. First,
\begin{align*}
    T_1^2&=t^{3}\int \BE\big[H_x(\eta_t)
    H_y(\eta_t)\big]|x\wedge y|\,\lambda^2(d(x,y))\\
    &= t^{3} \int e^{-t(|x|+|y|-|x\wedge y|)}
    |x\wedge y|\, \lambda^2(d(x,y)),
  \end{align*}
  where $x\wedge y$ denotes the coordinatewise minimum of
  $x,y\in[0,1]^d$. Fix a (possibly empty) set
  $I\subseteq \{1,\dots,d\}$, let $J:=I^c$, and denote by $x^I$ and
  $x^J$ the subvectors of $x\in[0,1]^d$ formed by coordinates from $I$
  and $J$. It suffices to restrict the integration domain to the set
  where $x\wedge y=(x^I,y^J)$ and let $T^2_{1,I}$ be the corresponding integral. Let $m$ denote the cardinality of $I$.
  If $m=0$, then
  \begin{align*}
    T_{1,I}^2 =  t^{3} \int e^{-t|x|} |y|\I\{y\prec x\}\,\lambda^2(d(x,y))
    = 2^{-d}t^{3} \int
    e^{-t|x|} |x|^2\,\lambda(dx) \leq 27\cdot 2^{-d}\sigma_{t/3}^2.
  \end{align*}
  Here and in what follows we use the inequality $se^{-s}\leq 1$
  with $s=t|y|$, which yields that
  \begin{align*}
    t^i\int |y|^{i-1} e^{-t|y|} \,\lambda(dy)
    \leq t\int (t|y|e^{-t|y|/i})^{i-1} e^{-t|y|/i}\,\lambda(dy)
    \leq i^{i}\sigma_{t/i}^2,\quad i\in\N.
  \end{align*}
  The same calculation applies if $m=d$. If $m\in\{1,\dots,d-1\}$,
  then
  \begin{multline*}
    T_{1,I}^2=
    t^{3} \int_{[0,1]^d}e^{t|x^I|\,|y^J|}|x^I|\,|y^J|
    \left(\int_{[0,1]^{m}}e^{-t|y^I|\,|y^J|}\I\{x^I\prec y^I\}\, dy^I\right)\\
    \times\left(\int_{[0,1]^{d-m}} e^{-t|x^I|\,|x^J|}\I\{y^J\prec x^J\}\,dx^J\right)
    \,\lambda(d(x^I,y^J)). 
  \end{multline*}
  It can be shown by a small adaptation of the proof of
  \cite[Lemma~3.1]{bhat:mol21}, that
  \begin{displaymath}
    s\int_{[0,1]^m} e^{-s|x|}\I\{y\prec x\} \, dx
    \leq Ce^{-s|y|/a}\Big[1+\big|\log(s|y|)\big|^{m-1}\Big], \quad
    y\in[0,1]^m, 
  \end{displaymath}
  for any $a>1$ and a constant $C$ that depends on $m$ and $a$.  Let
  $a\in(1,2)$. Then, with $s:=t|y^J|$, we have
  \begin{displaymath}
    t\int_{[0,1]^m} e^{-t|y^I|\,|y^J|} \I\{x^I\prec y^I\} |y^J| dy^I
    \leq Ce^{-t|y^J||x^I|/a}\Big[1+\big|\log(t|y^J||x^I|)\big|^{m-1}\Big]. 
  \end{displaymath}
  By applying the same argument to the integral over $[0,1]^{d-m}$, we
  have that
  \begin{align*}
    T_{1,I}^2 \leq C^2 t \int
    e^{-t|z|(2/a-1)}\Big[1+\big|\log(t|z|)\big|^{m-1}\Big]
    \Big[1+\big|\log(t|z|)\big|^{d-m-1}\Big]\,\lambda(dz). 
  \end{align*}
  This is of the order $\mathcal{O}(\log^{d-1} t)$ by considering all
  summands separately and following the proof of
  \cite[Lemma~3.2]{bhat:mol21}.

  In this setting, $h_i(y)=t|y|$ for all $i$ and $\tilde{h}(y)=t|y|$.
  Further terms can be calculated as follows:
  \begin{align*}
    T_3&=t\int e^{-t|x|}\,\lambda(dx)=\sigma_t^2,\\
    T_4&\leq 5t^2\int |y|e^{-t|y|}\,\lambda(dy)\leq 20\sigma_{t/2}^2,\\
    T_5&\leq 8 t^3\int |y|^2e^{-t|y|}\,\lambda(dy)\leq 216 \sigma_{t/3}^2,
  \end{align*}
  and the terms involved in the bound on the Kolmogorov distance are
  \begin{align*}
    T_6&= \Big(\int t^3|y|^2(1+t|y|) e^{-t|y|} \,\lambda(dy)\Big)^{1/2}
         \leq (27\sigma_{t/3}^2+256\sigma_{t/4}^2)^{1/2},\\
    T_7&= \Big(t\int e^{-t|x|}\,\lambda(dx)\Big)^{1/2}=\sigma_t,\\
    T_9&= \Big(\int \big[3t^2+3|y|t^3+2|y|^2t^4\big]|y|
         e^{-t|y|}\,\lambda(dy)\Big)^{1/2}
         \leq (12\sigma_{t/2}^2+81\sigma_{t/3}^2+512\sigma_{t/4}^2)^{1/2}.
  \end{align*}
  Noticing that $\sigma_t^2=\BV \deltaKS(G)$ behaves like $\log^{d-1} t$, we obtain from Proposition \ref{prop:7.1} that
  \begin{align*}
    \max\Big(d_W(\sigma_t^{-1}\deltaKS(G),N),
    d_K(\sigma_t^{-1}\deltaKS(G),N)\Big)
    =\mathcal{O}(\sigma_t^{-1}). 
  \end{align*}
\end{example}

\section*{Acknowledgement}
\label{sec:acknowledgements}

IM and MS have been supported by the Swiss National Science Foundation
Grant No. 200021\_175584. The authors are grateful to two
  referees for several stimulating comments which led to an
  improvement of the presentation.

\end{document}